\documentclass[12pt,oneside]{amsart}

\usepackage{amssymb,latexsym}

\usepackage{hyperref}
\hypersetup{
  colorlinks   = true, %Colours links instead of ugly boxes
  urlcolor     = blue, %Colour for external hyperlinks
  linkcolor    = blue, %Colour of internal links
  citecolor   = red %Colour of citations
}
\usepackage{anysize}
\marginsize{2cm}{2cm}{2cm}{2cm}

\usepackage[pdftex]{graphicx}
\usepackage{subcaption}
\usepackage{mathtools}

%\usepackage{lineno} 
%\linenumbers
\newtheorem{theorem}{Theorem}[section]

\newtheorem{lemma}[theorem]{Lemma}

\newtheorem{conjecture}[theorem]{Conjecture}
\newtheorem{corollary}[theorem]{Corollary}

\theoremstyle{definition}
\newtheorem{definition}[theorem]{Definition}

\theoremstyle{remark}
\newtheorem{remark}[theorem]{Remark}
\newtheorem{question}[theorem]{Question}

\numberwithin{equation}{section}

\DeclareMathOperator{\vol}{vol}

\DeclareMathOperator{\conv}{conv}

\renewcommand{\epsilon}{\varepsilon}
\renewcommand{\phi}{\varphi}
\renewcommand{\kappa}{\varkappa}

\begin{document}

\title{Symplectically self-polar polytopes of minimal capacity}

\author{Mark Berezovik}

\address{Mark Berezovik, School of Mathematical Sciences, Tel Aviv University, Israel,  69978}

\email{m.berezovik@gmail.com}

\thanks{The author was supported by Horizon Europe ERC Grant number 101045750 (HodgeGeoComb) and partially supported by the ISF grant No. 938/22}

\subjclass[2020]{52B60, 65D99}

\begin{abstract}
 In this paper we continue the study of symplectically self-polar convex bodies started in~\cite{berezovik2022symplectic}. We construct symplectically self-polar convex bodies of the minimal Ekeland--Hofer--Zehnder capacity. This in turn proves that the lower bound for the Ekeland--Hofer--Zehnder capacity for centrally symmetric convex bodies obtained in~\cite{akopyan2017estimating} cannot be improved. We also make some numerical experiments and speculations regarding the minimal volume of symplectically self-polar convex bodies.
\end{abstract}

\maketitle

\section{Introduction}

	Consider the standard symplectic form $\omega$ in $\mathbb{R}^{2n}$. For a convex set $X \subset \mathbb{R}^{2n}$ define its \emph{symplectically polar convex set} as
	\[
		X^{\omega} = \{y \in \mathbb{R}^{2n}\ |\ \forall x \in X\  \omega(x,y) \leq 1\}.		
	\] 
	Now let $X$ be a convex body with the origin in its interior. We call such a convex body $X$ \emph{symplectically self-polar} if $X = X^{\omega}$. It is important to note that every such body is centrally symmetric~\cite[Lemma 3.1]{berezovik2022symplectic}.
	
	The main motivation for the study of symplectically self-polar convex bodies was the connection to (symmetric) Mahler's conjecture~\cite{Mahler1939}, which states that
	\[
		\vol K \cdot \vol K^\circ \geq \frac{4^n}{n!}
	\]  
	for any centrally symmetric convex body $K \subset \mathbb{R}^n$, where
	\[
		K^\circ = \{y \in \mathbb{R}^{n}\ |\ \forall x \in K\  \langle x,y \rangle \leq 1\}.
	\]	
	 In~\cite{berezovik2022symplectic} it was shown that this conjecture is equivalent to a conjecture about volumes of symplectically self-polar bodies. The latter conjecture states that
	\begin{equation}\label{eq:conj:vol_1}
		\vol X \geq \frac{2^n}{n!}
	\end{equation}
	for any symplectically self-polar convex body $X \subset \mathbb{R}^{2n}$.

	In dimension 2 it is true that $\vol X \geq 3$ for every symplectically self-polar convex body~\cite[Corollary 5.2]{berezovik2022symplectic}. So the inequality~\eqref{eq:conj:vol_1} is not tight even in dimension 2. Note that this does not contradict the tightness of Mahler's conjecture, since the proof of equivalence of these two conjectures uses tools of asymptotic geometric analysis. So the natural question is, ``What is the tight version of inequality~\eqref{eq:conj:vol_1}?''

	We may try to formulate a stronger version of this inequality using Viterbo's conjecture for centrally symmetric convex bodies. A particular case of Viterbo's conjecture~\cite{viterbo2000metric} for the Ekeland--Hofer--Zehnder capacity states that
	\[
		\vol X \geq \frac{c_{EHZ}(X)^n}{n!},
	\]
	where $X \subset \mathbb{R}^{2n}$ is a convex body and $c_{EHZ}(X)$ is the Ekeland--Hofer--Zehnder capacity of the body $X$. This conjecture fails for the class of all convex bodies (see~\cite{haim2024counterexample}), however, it is still open for the case of centrally symmetric convex bodies. Combining this conjecture with the inequality 
	\begin{equation}\label{eq:cehz_selfpol}
		c_{EHZ}(X) \geq 2 + \frac{1}{n},
	\end{equation}
	for a symplectically self-polar convex body $X \subset \mathbb{R}^{2n}$ (see \cite[Theorem 5.1]{berezovik2022symplectic}), we can write a stronger version of the inequality~\eqref{eq:conj:vol_1} as follows
	\begin{equation}\label{eq:conj:vol_2}
		\vol X \geq \frac{\left(2 + 1/n\right)^n}{n!}.
	\end{equation}

	The proof of the inequality~\eqref{eq:cehz_selfpol} is based on the following theorem:
	\begin{theorem}[Akopyan, Karasev,~\cite{akopyan2017estimating}]\label{theorem:akopyan_karasev}
	 		For a centrally symmetric convex body $X \subset \mathbb{R}^{2n}$
	 		\[
	 			c_{EHZ}(X) \geq \left(2 + \frac{1}{n}\right) c_{J}(X),
	 		\]
	 		where
	 		\[
	 			c_{J}(X)^{-1} =  \max \{|\omega(x,y)|\ | \  x,y \in X^{\omega}\}.
	 		\]
	 \end{theorem}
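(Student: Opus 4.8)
The plan is to relate the Ekeland--Hofer--Zehnder capacity to a quantity that can be compared directly with the symplectic polar body $X^\omega$. Recall that for a centrally symmetric convex body, the EHZ capacity admits a variational characterization in terms of closed characteristics on the boundary $\partial X$, equivalently as the minimal action $\oint \lambda$ over the Reeb-type flow, or via Clarke's dual action functional. The starting observation should be that $c_J(X)^{-1} = \max\{|\omega(x,y)| : x,y \in X^\omega\}$ is the symplectic analogue of a width: it measures the largest symplectic area of a pair of vectors in the polar body, and by the definition of $X^\omega$ it encodes exactly the constraints $\omega(x,\cdot) \le 1$ coming from $x \in X$.

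First I would set up the dual/variational formula for $c_{EHZ}(X)$. For a centrally symmetric convex body, a closed characteristic can be taken to be centrally symmetric (since $X = -X$, the problem has a $\mathbb{Z}/2$ symmetry, and one expects the minimizing characteristic to respect it, or at least one can symmetrize). The key inequality $c_{EHZ}(X) \ge (2 + \tfrac1n)\, c_J(X)$ then looks like a lower bound on the action of such a characteristic in terms of the constant $c_J(X)$. So the second step is to express the action integral $\tfrac12 \oint \omega(z, \dot z)\,dt$ of a closed characteristic using the support-function description of $X$, and to bound it below. The factor $2 + \tfrac1n$ strongly suggests that the extremal configuration is a specific symmetric polygon/polytope-type characteristic: the ``$2$'' comes from the central symmetry (each point and its antipode contribute), and the ``$\tfrac1n$'' is the dimensional correction one gets from distributing the symplectic area optimally among the $n$ symplectic coordinate planes.

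The heart of the argument, and the main obstacle, will be the third step: proving the sharp constant $2 + \tfrac1n$ rather than merely $2$. The naive symmetry bound gives the factor $2$ for free, but squeezing out the extra $\tfrac1n$ requires a genuinely $2n$-dimensional averaging argument. I expect this to come from an isoperimetric-type or averaging inequality over the closed characteristic: writing the characteristic as $z(t)$ with $\dot z(t)$ proportional to the symplectic gradient of the support function, one relates $\oint \omega(z,\dot z)$ to $\max|\omega(x,y)|$ over $X^\omega$ by a Cauchy--Schwarz or convexity estimate in which the number of symplectic planes $n$ enters quantitatively. The technical difficulty is keeping track of the correct normalization so that the constant comes out as $2 + \tfrac1n$ and not something weaker; one must identify the precise extremizer (presumably an affine-symplectic image of a cross-polytope or a product of triangles) and verify that the chain of inequalities is tight on it.

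Finally I would assemble the pieces: combine the dual action formula, the central-symmetry reduction, and the sharp averaging estimate to conclude $c_{EHZ}(X) \ge (2 + \tfrac1n)\,c_J(X)$. I would double-check the boundary case by testing the inequality on the standard symplectic ball and on the conjectured extremal polytope, where equality (or near-equality) should hold, confirming that the constant cannot be improved.
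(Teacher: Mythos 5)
There is a mismatch you should be aware of at the outset: the paper does not prove Theorem~\ref{theorem:akopyan_karasev} at all. It is imported verbatim from~\cite{akopyan2017estimating} (with a weaker precursor in~\cite{gluskin2016asymptotic}) and used as a black box to derive inequality~\eqref{eq:cehz_selfpol}. So there is no internal proof to compare yours against; the only question is whether your attempt stands on its own. It does not, because it is a plan rather than a proof. Your first two steps are standard setup (the Clarke-dual / closed-characteristic description of $c_{EHZ}$ and the observation that $c_J(X)^{-1}$ is the best constant in $|\omega(x,y)|\leq c_J(X)^{-1}\|x\|_{X^{\omega}}\|y\|_{X^{\omega}}$), but your third step --- where the constant $2+\tfrac1n$ must actually be produced --- is explicitly deferred: ``I expect this to come from an isoperimetric-type or averaging inequality \dots the technical difficulty is keeping track of the correct normalization.'' That deferred step \emph{is} the theorem; without it you have proved nothing beyond what follows from the trivial bound $|\omega(x,y)|\leq c_J(X)^{-1}$ on pairs of points of $X=X^{\omega}$-type bodies.

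Two further concrete problems. First, your reduction to centrally symmetric closed characteristics (``one expects the minimizing characteristic to respect it, or at least one can symmetrize'') is not a known triviality: whether the action-minimizing closed characteristic of a centrally symmetric body can be taken antipodally symmetric is itself a delicate question, and the actual arguments of Gluskin--Ostrover and Akopyan--Karasev do not proceed by symmetrizing a characteristic; they work with the dual action functional (or, for polytopes, with a discrete minimax formula of the kind quoted here as Theorem~\ref{theorem:haim}) and feed the central symmetry in through the gauge function of $X$. Second, your heuristic for the source of the $\tfrac1n$ (``distributing the symplectic area optimally among the $n$ symplectic coordinate planes'') does not match how the constant arises in the cited proof, and the present paper's Theorem~\ref{main:cehz} shows the extremizer is the polytope $P^{\ltimes n}$, not ``an affine-symplectic image of a cross-polytope or a product of triangles'' as you guess. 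If you want to see why $2+\tfrac1n$ is the right constant and cannot be improved, the cleanest route within this paper is Corollary~\ref{coro:cehz_self} applied to the $2n+1$ mutually unit-symplectic vertices of Theorem~\ref{thm:induction}, which yields $\sum\gamma_i\gamma_j\omega(v_i,v_j)=\frac{n}{2n+1}$ and hence equality; but the lower bound itself still has to be taken from~\cite{akopyan2017estimating}.
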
 

	 Using also the fact that $c_{J}(X) = 1$ for a symplectically self-polar body $X$, we obtain inequality~\eqref{eq:cehz_selfpol}. A slightly weak version of Theorem~\ref{theorem:akopyan_karasev} was initially proved by Gluskin and Ostrover in~\cite[Theorem 1.6]{gluskin2016asymptotic}. The proof of Theorem~\ref{theorem:akopyan_karasev} can also be found in Appendix of~\cite{berezovik2022symplectic}.

	 The authors of~\cite{akopyan2017estimating} believed that the estimate in Theorem~\ref{theorem:akopyan_karasev} is not sharp. But in this paper we construct a sequence of symplectically self-polar polytopes $P^{\ltimes n} \subset \mathbb{R}^{2n}$ with the following property (Section~\ref{chapter:polytope_min_capacity}):

 	 \begin{theorem}\label{main:cehz}
 	 For the sequence of symplectically self-polar polytopes $P^{\ltimes n} \subset \mathbb{R}^{2n}$ one has
 	 \[
 	 c_{EHZ}(P^{\ltimes n}) = 2 + 1/n.
	\]  
  \end{theorem}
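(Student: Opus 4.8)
The plan is to prove the equality by establishing the two inequalities separately, and to observe that one of them is already available. The lower bound $c_{EHZ}(P^{\ltimes n}) \geq 2 + 1/n$ requires no new argument: since $P^{\ltimes n}$ is by construction symplectically self-polar, we have $c_{J}(P^{\ltimes n}) = 1$, and so Theorem~\ref{theorem:akopyan_karasev} (equivalently, inequality~\eqref{eq:cehz_selfpol}) yields $c_{EHZ}(P^{\ltimes n}) \geq 2 + 1/n$ at once. Consequently the entire content of Theorem~\ref{main:cehz} is concentrated in the matching upper bound $c_{EHZ}(P^{\ltimes n}) \leq 2 + 1/n$.

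For the upper bound I would use the variational description of the Ekeland--Hofer--Zehnder capacity of a convex body as the minimal symplectic action among the closed characteristics of its boundary. For a polytope this minimum is realized by a closed piecewise-linear curve $\gamma$ whose consecutive segments run, on each facet with outer normal $n_i$, along the Hamiltonian direction $J n_i$, with the turning points located on lower-dimensional faces; equivalently, one may invoke the combinatorial minimax formula of Haim--Kislev expressing $c_{EHZ}$ of a polytope through the facet normals and support numbers. Because the capacity is a \emph{minimum} over admissible curves, it is enough to exhibit one admissible closed characteristic $\gamma \subset \partial P^{\ltimes n}$ and to compute that its action $\tfrac{1}{2}\oint \omega(x,\mathrm{d}x)$ equals exactly $2 + 1/n$.

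The curve $\gamma$ should be read off the recursive structure of the product $\ltimes$. I would describe it inductively, assembling the characteristic for $P^{\ltimes n}$ from that of $P^{\ltimes(n-1)}$ (or from the planar building block $P$), so that $\gamma$ visits each of the $n$ symplectic coordinate planes in turn before closing up. The action then splits as a sum of $n$ per-plane contributions together with cross terms coming from the twisting in the $\ltimes$-product, and the self-polar normalization fixes the support numbers so that this sum totals $2 + 1/n$.

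I expect the main obstacle to be twofold. First, one must single out the \emph{correct} closed curve — the action-minimizer — rather than an arbitrary closed characteristic of larger action; this is precisely where the self-polar symmetry of $P^{\ltimes n}$ is essential, since it pins down both the candidate trajectory and the support numbers governing its action. Second, one must verify admissibility at the non-smooth points: at every face where $\gamma$ turns, its velocity must lie in $J$ applied to the normal cone of $P^{\ltimes n}$ at that point (on a facet this forces $\dot\gamma \parallel J n_i$), so that $\gamma$ is a genuine generalized characteristic and not merely a convenient polygon. Once such a $\gamma$ is produced with action $2 + 1/n$, minimality of the capacity gives $c_{EHZ}(P^{\ltimes n}) \leq 2 + 1/n$, and together with the lower bound above this establishes the claimed equality.
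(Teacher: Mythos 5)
Your treatment of the lower bound is exactly the paper's: self-polarity gives $c_J(P^{\ltimes n})=1$, and inequality~\eqref{eq:cehz_selfpol} does the rest. The problem is the upper bound, where your proposal is a plan rather than a proof: the witness whose action equals $2+1/n$ is never actually constructed. You say the closed characteristic ``should be read off the recursive structure'' and that its action ``splits as a sum of $n$ per-plane contributions together with cross terms \dots so that this sum totals $2+1/n$,'' but no curve is specified, no action is computed, and the admissibility issue you yourself flag (velocities lying in $J$ applied to the normal cone at every breakpoint) is left unresolved. That admissibility verification is genuinely delicate for a generalized characteristic on a polytope boundary, and nothing in your sketch addresses it.

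The paper avoids all of this by committing to the combinatorial formula you mention only in passing. Via Theorem~\ref{theorem:haim} and its self-polar reformulation (Corollary~\ref{coro:cehz_self}), $1/c_{EHZ}$ is a \emph{maximum} over coefficient vectors and permutations, so any single admissible choice of vertices and weights yields an upper bound on $c_{EHZ}$ with no admissibility check at all. The key missing idea in your write-up is the combinatorial input that makes this work: $P^{\ltimes n}$ has $2n+1$ pairwise distinct, pairwise non-opposite vertices $v_1,\dots,v_{2n+1}$ with $\omega(v_i,v_j)=1$ for all $i<j$ (Theorem~\ref{thm:induction}, proved by induction using the vertex description of the $P$-suspension in Corollary~\ref{cor:vertices}: lift the old collection by $u\oplus(\cdot)$ and adjoin $(0,1)\oplus 0$ and $(-1,0)\oplus 0$). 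Taking equal weights $\gamma_i=1/(2n+1)$ then gives $\sum_{i<j}\gamma_i\gamma_j\omega(v_i,v_j)=n/(2n+1)$, hence $c_{EHZ}(P^{\ltimes n})\le (2n+1)/n = 2+1/n$. Without identifying this specific configuration (or an equivalent explicit characteristic with verified admissibility and computed action), your argument does not close.
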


  Numerical experiments (Section~\ref{section:numerical}) suggest that inequality~\eqref{eq:conj:vol_2} is not sharp even in dimension 4, and it seems that symplectically self-polar polytopes $P^{\ltimes n} \subset \mathbb{R}^{2n}$ minimize the volume (Section~\ref{chapter:min_vol}). That is, the hypothetical sharp lower bound for the volume of symplectically self-polar bodies is the following:
  \[
  	\vol X \geq \vol P^{\ltimes n} = \frac{2^n}{n!} \cdot \frac{\Gamma\left(n+\frac{3}{4}\right)}{\Gamma\left(n+\frac{1}{2}\right)} \cdot \frac{\Gamma\left(\frac{1}{2}\right)}{\Gamma\left(\frac{3}{4}\right)}= \frac{2^n}{n!} \cdot O\left( n^{1/4} \right),
  \]
  where $\Gamma (x)$ is the gamma function (Theorem~\ref{thm:volume} and Theorem~\ref{thm:prop_P}).

  To construct the sequence of polytopes $P^{\ltimes n}$ we introduce a new operation over symplectically self-polar bodies, which we call a \emph{symplectic  $P$-suspension} (Section~\ref{chapter:increase_dim}).

  We mostly focus on symplectically self-polar polytopes in this paper. Some properties of more general self-polar polytopes were studied in~\cite{Jensen2019SelfpolarP}. 

  As shown in recent paper~\cite{berezovik2025outer}, the class of symplectically self-polar convex bodies is of interest to the researchers working on outer billiard maps.

\subsection*{Acknowledgments} The author expresses particular gratitude towards Roman Karasev for his guidance and useful comments. We thank Pazit Haim-Kislev for numerical computation of $c_{EHZ}(P^{\ltimes 2})$, which prompted the author to formulate and prove Theorem~\ref{main:cehz}; Alexander Kamal, Alexey Balitskiy, Anastasiia Sharipova, Yaron Ostrover, Michael Fraiman, and the anonymous referee for useful remarks and comments.

\section{Increasing the dimension of symplectically self-polar bodies}\label{chapter:increase_dim}

	Consider the hexagon $P = \conv\{\pm(1,1),\pm(1,0),\pm(0,1)\} \subset \mathbb{R}^2$ and fix the point $e = (1,1) \in P$ (see Figure~\ref{fig:P}). It was shown in~\cite{berezovik2022symplectic} that the hexagon $P$ has the minimal volume among all 2-dimensional symplectically self-polar bodies. 
  \begin{figure}[ht]
	\centering 
    \includegraphics[width=0.4\textwidth]{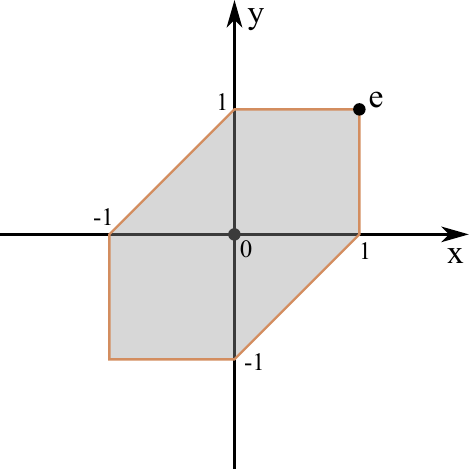}
    \caption{Hexagon $P$.}
    \label{fig:P}
  \end{figure}		
  
	\begin{definition}
		Let $X \subset \mathbb{R}^{2n}$ be a centrally symmetric convex body. The \emph{symplectic $P$-suspension of $X$} is the set
		\[
       P \ltimes_{\omega} X := \{(v_1,v_2) \in P \times \mathbb{R}^{2n}\ | \ |\omega(e,v_1)| + \|v_2\|_X \leq 1\},
    	\]
    	where $\|\cdot\|_X$ is a norm on $\mathbb{R}^{2n}$ whose unit ball is $X$.
	\end{definition}
	
	\begin{remark}
		$P \ltimes_{\omega} X$ is a subset of the symplectic product $\mathbb{R}^2 \times \mathbb{R}^{2n} \cong \mathbb{R}^{2n+2}$. Throught the paper, when we write $(a,b) \in \mathbb{R}^{2n+2} \cong \mathbb{R}^2 \times \mathbb{R}^{2n}$ or $a \oplus b \in \mathbb{R}^{2n+2}\cong \mathbb{R}^2 \times \mathbb{R}^{2n}$, we mean that $a \in \mathbb{R}^2$ and $b \in \mathbb{R}^{2n}$.
	\end{remark}
	Obviously, $P \ltimes_{\omega} X$ is a centrally symmetric convex body. Less obvious is the fact that the symplectic $P$-suspension preserves symplectic self-polarity.

	\begin{theorem}\label{main:thm}
		If $X$ is a symplectically self-polar convex body, then $P \ltimes_{\omega} X$ is also symplectically self-polar.
	\end{theorem}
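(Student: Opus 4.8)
The plan is to compute the symplectic polar $(P\ltimes_\omega X)^\omega$ explicitly and check that it coincides with $P\ltimes_\omega X$. Write $Y = P\ltimes_\omega X \subset \mathbb{R}^2\times\mathbb{R}^{2n}$ and equip the product with the split form $\omega((v,x),(w,y)) = \omega(v,w)+\omega(x,y)$, using the convention $\omega((a_1,a_2),(b_1,b_2)) = a_1b_2 - a_2 b_1$ on $\mathbb{R}^2$. By definition $(w,y)\in Y^\omega$ iff $\sup_{(v,x)\in Y}[\omega(v,w)+\omega(x,y)] \le 1$, so the whole argument reduces to evaluating this supremum and reading off the constraints it places on $(w,y)$.

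First I would perform the supremum in two stages, inner over $x$ and outer over $v$. For fixed $v\in P$ the defining inequality of $Y$ forces $x$ to range over the ball $\{\|x\|_X \le 1-|\omega(u,v)|\}$, whose radius $r := 1-|\omega(u,v)|$ is nonnegative for $v\in P$; by homogeneity the inner supremum equals $r\cdot\sup_{x\in X}\omega(x,y)$. This is where the hypothesis on $X$ enters: since $X$ is symplectically self-polar, $X = X^\omega$, and $\sup_{x\in X}\omega(x,y)$ is exactly the gauge $\|y\|_X$ of $X^\omega = X$. Setting $s := \|y\|_X$, the problem collapses to the two-dimensional quantity $F(w,s) := \sup_{v\in P}[\omega(v,w) - s\,|\omega(u,v)|]$, with $(w,y)\in Y^\omega$ iff $F(w,s) + s \le 1$.

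Next I would compute $F(w,s)$. With $u=(1,1)$ one has $\omega(u,v)=v_2-v_1$, so the integrand is concave and piecewise linear in $v$, with its only kink along the line $\omega(u,\cdot)=0$, i.e. $v_1=v_2$. The crucial geometric observation is that this line meets $\partial P$ precisely at the vertices $\pm u$ of $P$; hence the subdivision it induces creates no new breakpoints and the maximum is attained at a vertex of $P$. Evaluating the six vertices $\pm(1,1),\pm(1,0),\pm(0,1)$ then yields, after simplification, $F(w,s) = \max\{|w_1-w_2|,\,|w_1|-s,\,|w_2|-s\}$.

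Finally, $F(w,s)+s\le 1$ unpacks into $|w_1|\le 1$, $|w_2|\le 1$, and $|w_1-w_2|\le 1-s$. Comparing with the facet description $P = \{|w_1|\le 1,\,|w_2|\le 1,\,|w_1-w_2|\le 1\}$ of the hexagon and with $|\omega(u,w)| = |w_1-w_2|$, these are exactly the inequalities $w\in P$ and $|\omega(u,w)| + \|y\|_X \le 1$ defining $Y$ (the bound $|w_1-w_2|\le 1-s$ automatically gives $|w_1-w_2|\le 1$). Hence $Y^\omega = Y$. I expect the main obstacle to be the vertex computation of $F$ together with verifying that the kink line contributes no interior breakpoints: it is precisely this matching between the vertex values of $F$ and the facet inequalities of $P$ that makes the construction work, and it is special to this hexagon and to the choice $u=(1,1)$.
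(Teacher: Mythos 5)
Your proof is correct, and it takes a genuinely different route from the paper's. The paper argues by two inclusions: it shows $P\ltimes_\omega X\subseteq(P\ltimes_\omega X)^\omega$ by bounding $|\omega(r,q)|$ for two arbitrary points of the suspension, which requires the symmetric two-point inequality $|\omega(v,w)|\le t+s-ts$ on the hexagon (its Lemma~\ref{lem:P_omeg}, singled out there as the crucial and $P$-specific step), and then proves the reverse inclusion by exhibiting, for each point of $P\times\mathbb{R}^{2n}$ outside the suspension, an explicit witness of the form $(u,\pm y)$. You instead compute $(P\ltimes_\omega X)^\omega$ outright: the iterated supremum, first over $x$ (where self-polarity converts $\sup_{x\in X}\omega(x,y)$ into $\|y\|_X$) and then over $v$, reduces everything to the one piecewise-linear optimization $F(w,s)=\sup_{v\in P}[\omega(v,w)-s|\omega(u,v)|]$, whose value read off at the six vertices reproduces exactly the facet description of $P$ together with the defining inequality of the suspension. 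Your approach derives the answer rather than verifying it, and as a by-product it actually shows the cleaner identity $(P\ltimes_\omega X)^\omega=\{(w,y)\mid w\in P,\ |\omega(u,w)|+\|y\|_{X^\omega}\le 1\}$ for any centrally symmetric $X$, which is close in spirit to the paper's Theorem~\ref{th:represent}; the paper's version, in exchange, isolates in Lemma~\ref{lem:P_omeg} the precise property of $(P,u)$ that makes the construction work and fails for other self-polar planar bodies. One small caution: concavity of $v\mapsto\omega(v,w)-s|\omega(u,v)|$ does \emph{not} by itself put the maximum at a vertex (that is a property of convex objectives); what saves you is exactly the observation you make, namely that the kink line $\omega(u,\cdot)=0$ meets $\partial P$ only at the vertices $\pm u$, so $P$ splits into two polygons on each of which the objective is linear and whose vertices are all vertices of $P$. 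That subdivision argument, not concavity, is the load-bearing step, and it is where the special choice of $P$ and $u$ enters your proof just as Lemma~\ref{lem:P_omeg} does in the paper's.
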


	The proof of this theorem relies on the two following lemmas.
	
	\begin{lemma}\label{lem:omega:ineq}
		Let $X \subset \mathbb{R}^{2n}$ be a centrally symmetric convex body. Then
		\[
			|\omega(v,w)| \leq \|v\|_X \cdot \|w\|_{X^{\omega}}
		\]
		for all $v,w \in \mathbb{R}^{2n}$.
		
		Moreover, for any $v \in \mathbb{R}^{2n}$ there is a point $w \in \partial X^{\omega}$ such that
		\[
			|\omega(v,w)| = \|v\|_X.
		\]
	\end{lemma}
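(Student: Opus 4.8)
The plan is to derive both assertions from the single duality identity
\[
\|x\|_X = \max_{y \in X^\omega} |\omega(x,y)|,
\]
after which the inequality is immediate and the ``moreover'' part merely records that the maximum on the right is attained at a boundary point.

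First I would prove the inequality directly, without invoking the identity. For nonzero $x,y$ put $x' = x/\|x\|_X$ and $y' = y/\|y\|_{X^\omega}$, so that $x' \in X$ and $y' \in X^\omega$. By the definition of the symplectic polar $X^\omega$ we have $\omega(x',y') \le 1$, and applying this to $-x' \in X$ (here central symmetry of $X$ is used) gives $\omega(x',y') \ge -1$; hence $|\omega(x',y')| \le 1$. Multiplying through by $\|x\|_X \cdot \|y\|_{X^\omega}$ yields the claim, the cases $x=0$ or $y=0$ being trivial.

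For the ``moreover'' statement I would pass to the ordinary polar through the structure matrix $J$ defined by $\omega(a,b) = \langle Ja, b\rangle$, which is orthogonal. A short computation gives $X^\omega = (JX)^\circ = J X^\circ$, the second equality using the orthogonality of $J$, and therefore, for every $x$,
\[
\max_{y \in X^\omega} \omega(x,y) = \max_{z \in X^\circ} \langle Jx, Jz \rangle = \max_{z \in X^\circ} \langle x, z \rangle = \|x\|_X,
\]
where the middle equality is again orthogonality of $J$ and the last equality is the classical fact that the support function of $X^\circ$ is the gauge of $X = (X^\circ)^\circ$. Since $X$ is centrally symmetric this equals $\max_{y \in X^\omega}|\omega(x,y)|$, and as the maximum of a linear functional over the convex body $X^\omega$ it is attained at some boundary point $y \in \partial X^\omega$; that maximizer satisfies $|\omega(x,y)| = \|x\|_X$, as required.

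The only real content is the reverse inequality hidden in the ``moreover'' part: the inequality already proved gives $\max_{y \in X^\omega}|\omega(x,y)| \le \|x\|_X$, and the substance is that this bound is actually achieved. This is precisely where the involutivity of polarity enters, transported to the symplectic setting via the orthogonal change of variables $J$ (equivalently, via the symplectic bipolar theorem $(X^\omega)^\omega = X$). An alternative would be a Hahn--Banach separation argument producing a supporting symplectic functional at the point $x/\|x\|_X \in \partial X$, but the $J$-identification is cleaner and simply reuses the standard bipolar theorem rather than re-deriving a separation statement.
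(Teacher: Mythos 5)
Your proof is correct. The first half is the same homogeneity-and-scaling argument as the paper's, so there is nothing to compare there. For the ``moreover'' half the two arguments diverge in mechanics while resting on the same underlying fact: the paper stays entirely in the symplectic setting and argues by contradiction --- if $|\omega(x,y)|<1$ for all $y\in X^{\omega}$ then (by compactness of $X^{\omega}$, which is used tacitly) some dilate $\alpha x$ with $\alpha>1$ would still lie in $(X^{\omega})^{\omega}=X$, contradicting $x\in\partial X$ --- whereas you transport the problem to ordinary polarity via the identity $X^{\omega}=JX^{\circ}$ (valid because $J$ is orthogonal) and then quote the classical equality $h_{X^{\circ}}(x)=\|x\|_X$, i.e.\ the Euclidean bipolar theorem, together with the fact that a linear functional attains its maximum over a convex body on the boundary. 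Your route has the advantage of producing the maximizer constructively as an $\arg\max$ and of isolating exactly where duality enters (it is literally $(X^{\circ})^{\circ}=X$ conjugated by $J$); the paper's route avoids introducing $J$ and $X^{\circ}$ at all and is marginally more self-contained, at the cost of leaving the compactness step implicit. Both are complete; one small point worth a parenthetical in your write-up is the degenerate case $x=0$, where the functional is zero and ``attained at a boundary point'' should be read as ``any boundary point works''.
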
	

	\begin{proof}
		For the first part of the lemma, assume that $v,w \neq 0$. The left hand side and the right hand side are homogeneous with respect to scaling of $v$ and $w$. Scale $v$ and $w$ so that $\|v\|_X = \|w\|_{X^{\omega}} = 1$. Then the statement follows from the definition of $X^\omega$.
		
		For the second part similarly assume that $\|v\|_X = 1$. Then $v \in \partial X$. From the first part it follows that $|\omega(v,w)| \leq 1$ for all $w \in X^{\omega}$. If the inequality is strict, then $|\omega(\alpha v,w)| \leq 1$ for some $\alpha > 1$ and any $w \in X^{\omega}$. This contradicts the fact that $(X^{\omega})^{\omega} = X$.
	\end{proof}

	\begin{lemma}\label{lem:P_omeg}
		Let $v,w \in P$. Let $t = |\omega(e,v)|$ and $s = |\omega(e,w)|$. Then
		\[
			|\omega(v,w)| \leq t + s - ts,
		\]
		with equality only when $v,w \in \partial P$.
	\end{lemma}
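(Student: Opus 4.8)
The plan is to work in coordinates, using the description $P=\{(x,y): |x|\le 1,\ |y|\le 1,\ |x-y|\le 1\}$. With $u=(1,1)$ one has $\omega(u,v)=v_2-v_1$ and $\omega(v,w)=v_1w_2-v_2w_1$, so that $t=|v_1-v_2|\in[0,1]$, $s=|w_1-w_2|\in[0,1]$, and the right-hand side is $t+s-ts=1-(1-t)(1-s)\in[0,1]$. Written this way the target inequality scales well, so I would first reduce it to the boundary.

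For the reduction, write a nonzero $v$ as $v=\lambda\bar v$ with $\bar v\in\partial P$ and $\lambda=\|v\|_P\in(0,1]$, and likewise $w=\mu\bar w$; set $\bar t=|\omega(u,\bar v)|$ and $\bar s=|\omega(u,\bar w)|$, so $t=\lambda\bar t$, $s=\mu\bar s$ and $\omega(v,w)=\lambda\mu\,\omega(\bar v,\bar w)$. Granting the boundary inequality $|\omega(\bar v,\bar w)|\le \bar t+\bar s-\bar t\bar s$, a direct expansion gives
\[
(t+s-ts)-|\omega(v,w)|\ \ge\ \lambda\bar t(1-\mu)+\mu\bar s(1-\lambda)\ \ge\ 0,
\]
so it suffices to treat $v,w\in\partial P$ (the cases $v=0$ or $w=0$ being immediate). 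The same identity drives the equality discussion: once $t,s>0$, both summands force $\lambda=\mu=1$.

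Having reduced to $\partial P$, I would exploit symmetry. The maps $v\mapsto-v$ and $w\mapsto-w$ preserve $t$, $s$ and $|\omega(v,w)|$, so I may assume $v,w$ lie on the three edges $\{x=1\}$, $\{y=1\}$, $\{x-y=-1\}$; the swap $v\leftrightarrow w$ (which also preserves everything, since $\omega$ is antisymmetric and $t+s-ts$ is symmetric) cuts this down to six edge-pairs. Parametrizing each edge by its $t$-value — note $t\equiv1$ on the diagonal edge $\{x-y=-1\}$ — reduces every case to an elementary inequality: two points on the same edge give $|\omega(v,w)|=|t-s|\le\max(t,s)\le t+s-ts$ (using $ts\le\min(t,s)$); the pair $(\{x=1\},\{y=1\})$ gives the exact identity $\omega(v,w)=t+s-ts$; and each pair involving the diagonal edge reduces to $|\omega(v,w)|\le1=t+s-ts$.

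The case analysis is routine bilinear algebra; the one point needing care is the equality clause. Collecting the equality conditions from the six cases together with the reduction identity above shows that, when $t,s>0$, equality forces $v,w\in\partial P$. I would flag that in the degenerate directions $v\in\mathbb{R}u$ or $w\in\mathbb{R}u$ (where $t=0$ or $s=0$) the bound degenerates to $|\omega(v,w)|\le\max(t,s)$ and can be attained at interior points, so the equality statement is to be read under the harmless assumption $t,s>0$. I expect this bookkeeping of the equality cases, rather than any individual inequality, to be the only real friction in the argument.
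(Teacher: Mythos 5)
Your argument is correct, but it takes a longer route than the paper. The paper's proof is a direct three-line computation: after replacing $v$ by $-v$ and $w$ by $-w$ if necessary so that $y_v-x_v=t\ge 0$ and $y_w-x_w=s\ge 0$, it substitutes to get $\omega(v,w)=x_v s-x_w t$ and reads off the bound from $x_v\in[-1,1-t]$, $x_w\in[-1,1-s]$ (an affine function on a box is extremized at corners). Your proof instead first reduces to $v,w\in\partial P$ via the homogeneity identity $(t+s-ts)-|\omega(v,w)|\ge\lambda\bar t(1-\mu)+\mu\bar s(1-\lambda)$ and then runs a six-case analysis over edge pairs; I checked the identity and all six cases, and they are sound (the pair $\{x=1\},\{y=1\}$ giving the exact identity $t+s-ts$ is the extremal configuration in both proofs). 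What your approach buys is the equality discussion: the paper's proof establishes only the inequality and never addresses the "equality only when $v,w\in\partial P$" clause (which is in fact not used anywhere downstream), whereas your scaling identity isolates exactly when equality can occur and correctly flags that the clause fails as literally stated in the degenerate cases $t=0$ or $s=0$ (e.g.\ $v=u$, $w=(0,\tfrac12)$ gives equality with $w$ interior). What the paper's approach buys is brevity: one sign normalization replaces your boundary reduction and the symmetry/case bookkeeping entirely.
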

	
	\begin{proof}
		Let $v = (x_v, y_v)$ and $w = (x_w, y_w)$. Then $|y_v - x_v| = t$ and $|y_w - x_w| = s$. If necessary, we may replace $v$ by $-v$ and $w$ by $-w$, so that $y_v - x_v = t$ and $y_w - x_w = s$. Then $x_v \in [-1, 1-t]$, $x_w \in [-1, 1-s]$ and 
	\[
		\omega(v,w) = x_v y_w - y_v x_w = x_v(s+x_w) - x_w(t+x_v) = x_vs - x_wt.
	\]
	Therefore,
	\[
		- s - (1-s)t\leq \omega(v,w) \leq (1-t)s + t.
	\]
	\end{proof}
	
	\begin{proof}[Proof of Theorem~\ref{main:thm}]
		First, we show that the inequality $|\omega(v,w)| \leq 1$ holds for any points $v,w \in P \ltimes_{\omega} X$. It will prove the inclusion $P \ltimes_{\omega} X \subseteq (P \ltimes_{\omega} X)^{\omega}$. By the definition, $v = (v_1,v_2)$ and $w = (w_1, w_2)$, where $v_1,w_1 \in P$ and $v_2,w_2 \in \mathbb{R}^{2n}$. Note that
		\[
			|\omega(v,w)| = |\omega(v_1,w_1) + \omega(v_2,w_2)| \leq |\omega(v_1,w_1)| + |\omega(v_2,w_2)|.
		\]
		Let $t = |\omega(e,v_1)|$ and $s = |\omega(e,w_1)|$. Then $\|v_2\|_X \leq 1-t$ and $\|w_2\|_X \leq 1-s$. By Lemma~\ref{lem:omega:ineq} the inequality 
		\[
			|\omega(v_2,w_2)| \leq \|v_2\|_X \cdot \|w_2 \|_{X^{\omega}} = \|v_2\|_X \cdot \|w_2 \|_X \leq (1-t)(1-s)
		\]
		holds.
		And by Lemma~\ref{lem:P_omeg} the inequality 
		\[
			|\omega(v_1,w_1)| \leq t + s - ts
		\]
		also holds.
		Combining these inequalities, we obtain the desired inequality.
		
		Conversely, we show that $(P \ltimes_{\omega} X)^{\omega} \subseteq (P \ltimes_{\omega} X)$. To prove this, note first that $P \times \{0\} \subset P \ltimes_{\omega} X$, and consequently, $(P \ltimes_{\omega} X)^{\omega} \subset P \times \mathbb{R}^{2n}$. Take a point $v \in P \times \mathbb{R}^{2n} \setminus P \ltimes_{\omega} X$. This point has the representation $v = (v_1,v_2)$, where $v_1 \in P$ and $\|v_2\|_X > 1-|\omega(e,v_1)|$. By Lemma~\ref{lem:omega:ineq} there is a point $u \in X$ such that $|\omega(v_2,u)| > 1-|\omega(e,v_1)|$. Note that $(e, \pm u) \in P \ltimes_{\omega} X$, and
		\[
			\max |\omega((v_1,v_2), (e, \pm u))| = |\omega(v_1,e)| + |\omega(v_2,u)| > 1.
		\]
		Therefore, $v \notin (P \ltimes_{\omega} X)^{\omega}$. 
	\end{proof}

	\begin{remark}
		Lemma~\ref{lem:P_omeg} is crucial for the proof of Theorem~\ref{main:thm}. It does not hold if we replace $P$ by an arbitrary 2-dimensional symplectically self-polar body, for example, the Euclidean unit ball $B$, take a point $\tilde{e}$ on its border, and consider
		\[
       B \ltimes_{\omega} X := \{(v_1,v_2) \in B \times \mathbb{R}^{2n}\ | \ |\omega(\tilde{e},v_1)| + \|v_2\|_X \leq 1\}.
    	\]
    	A direct calculation shows that this body is not symplectically self-polar for any symplectically self-polar $X \subset \mathbb{R}^{2n}$. 
	\end{remark}

	The convex body $P \ltimes_{\omega} X$ may be represented in another way.
	
	\begin{theorem}\label{th:represent}
		Let $X \subset \mathbb{R}^{2n}$ be a centrally symmetric convex body. Then
		\begin{equation}\label{eq:another_form_suspension}
     		P \ltimes_{\omega} X = P \times \mathbb{R}^{2n} \cap \left(\{\pm e\} \times X^{\omega}\right)^{\omega}
	 	\end{equation}
		and
		\begin{equation}\label{eq:another_form_suspension_polar}
		    (P \ltimes_{\omega} X)^{\omega} = \conv \left(P \times \{0\} \cup \{\pm e\} \times X^{\omega}\right).
		\end{equation}
	\end{theorem}

	\begin{proof}
		By definition, $P \ltimes_{\omega} X \subset P \times \mathbb{R}^{2n}$. Let $(v_1,v_2) \in P \ltimes_{\omega} X$. We will show that $(v_1,v_2)$ belongs to the set in the right-hand side of equality~\eqref{eq:another_form_suspension}. Consider a point $u \in X^{\omega}$. Using Lemma~\ref{lem:omega:ineq} and the definition of $P \ltimes_{\omega} X$, we obtain
		\[
			|\omega((v_1,v_2),(\pm e,u))| \leq |\omega(v_1,e)| + |\omega(v_2,u)| \leq |\omega(v_1,e)| + \|v_2\|_X \leq 1.
		\]
		This proves the inclusion in one direction.
		
		Now let $(v_1,v_2) \in P \times \mathbb{R}^{2n} \cap \left(\{\pm e\} \times X^{\omega}\right)^{\omega}$. Then by Lemma~\ref{lem:omega:ineq} there is a point $u \in X^{\omega}$ such that
	\[
      	|\omega(v_1,e)| + \|v_2\|_X = |\omega(v_1,e)| + |\omega(v_2,u)| = \max|\omega((v_1,v_2),(e,\pm u))| \leq 1.
    \]
    This proves the reverse inclusion.
    
    Now we prove equality~\eqref{eq:another_form_suspension_polar}. Note that for arbitrary closed centrally symmetric nonempty convex sets $A,B,D \subseteq \mathbb{R}^{2n}$ one has $(A \cap B)^\omega = \conv(A^\omega \cup B^{\omega})$ and $(D^{\omega})^\omega = D$. Proof of these facts is similar to that in the case of standard polarity, which can be found in~\cite[Section 1.6]{Schneider_2013}.

    Let $A = P \times \mathbb{R}^{2n}$ and $B = \left(\{\pm e\} \times X^{\omega}\right)^{\omega}$. It is not hard to prove that $A^\omega = P \times \{0\}$. Since $\conv(\{\pm e\} \times X^{\omega}) = [-e,e]\times X^{\omega}$, and the symplectic polar of the convex hull is the same as symplectic polar of initial set, we have that $B = \left(\{\pm e\} \times X^{\omega}\right)^{\omega} = \left([-e,e] \times X^{\omega}\right)^{\omega}$.  Now let $D = [-e,e]\times X^{\omega}$. It is obvious that $D$ is a closed centrally symmetric convex set, therefore, $B^{\omega} = (D^{\omega})^{\omega} = D$. Thus,
    \[
    	(P \ltimes_{\omega} X)^{\omega} = \conv \left(P \times \{0\} \cup [-e,e] \times X^{\omega}\right) = \conv \left(P \times \{0\} \cup \{\pm e\} \times X^{\omega}\right).
    \] 
	\end{proof}

	\begin{corollary}\label{cor:forselfpol}
		Let $X \subset \mathbb{R}^{2n}$ be a symplectically self-polar convex body. Then
		\[
         	P \ltimes_{\omega} X = P \times \mathbb{R}^{2n} \cap \left(\{\pm e\} \times X\right)^{\omega} = \conv \left(P \times \{0\} \cup \{\pm e\} \times X\right).
	    \]
	\end{corollary}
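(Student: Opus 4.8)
The plan is to derive both equalities directly from Theorem~\ref{th:represent} by feeding in the self-polarity hypothesis $X = X^{\omega}$. Since the two displayed identities of Theorem~\ref{th:represent} are stated for an arbitrary centrally symmetric convex body, the strategy is simply to substitute $X^{\omega} = X$ into each of them and then simplify.

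First I would dispatch the first equality. Substituting $X^{\omega} = X$ into the formula $P \ltimes_{\omega} X = P \times \mathbb{R}^{2n} \cap \left(\{\pm u\} \times X^{\omega}\right)^{\omega}$ from Theorem~\ref{th:represent} immediately yields
\[
	P \ltimes_{\omega} X = P \times \mathbb{R}^{2n} \cap \left(\{\pm u\} \times X\right)^{\omega},
\]
which is precisely the middle term in the claimed chain of equalities. No further argument is needed for this part.

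For the second equality, the key observation is that $P \ltimes_{\omega} X$ is itself symplectically self-polar by Theorem~\ref{main:thm}, since $X$ is assumed to be. Hence $(P \ltimes_{\omega} X)^{\omega} = P \ltimes_{\omega} X$. Substituting $X^{\omega} = X$ into the second formula of Theorem~\ref{th:represent} gives $(P \ltimes_{\omega} X)^{\omega} = \conv \left(P \times \{0\} \cup \{\pm u\} \times X\right)$, and replacing the left-hand side by $P \ltimes_{\omega} X$ produces
\[
	P \ltimes_{\omega} X = \conv \left(P \times \{0\} \cup \{\pm u\} \times X\right).
\]
Chaining the two identities together completes the proof.

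I expect essentially no genuine obstacle here beyond careful bookkeeping, as all the substantive content has already been isolated in Theorem~\ref{th:represent} and Theorem~\ref{main:thm}. The only step requiring a moment of care is the invocation of Theorem~\ref{main:thm} to justify the self-polarity of $P \ltimes_{\omega} X$, which is exactly what lets one collapse the polar $(P \ltimes_{\omega} X)^{\omega}$ back to $P \ltimes_{\omega} X$ and thereby turn the formula for the polar into a formula for the body itself.
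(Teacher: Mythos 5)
Your proof is correct and matches the paper's intended (and unwritten) derivation: the corollary is stated without proof precisely because it follows by substituting $X^{\omega}=X$ into both formulas of Theorem~\ref{th:represent} and invoking Theorem~\ref{main:thm} to identify $(P \ltimes_{\omega} X)^{\omega}$ with $P \ltimes_{\omega} X$. Nothing further is needed.
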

	\begin{corollary}\label{cor:vertices}
		Let $K \subset \mathbb{R}^{2n}$ be a symplectically self-polar convex polytope and $V(K)$ be the set of its vertices. Then $P \ltimes_{\omega} K$ is a polytope and
		\[
			V(P \ltimes_{\omega} K) = (\{\pm(1,0),\pm(0,1)\} \times \{0\}) \cup (\{\pm e\} \times V(K)).
		\] 
	\end{corollary}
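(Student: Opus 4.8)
The plan is to reduce everything to a statement about the extreme points of the convex hull of a finite set, using the representation supplied by Corollary~\ref{cor:forselfpol}. Since $K$ is symplectically self-polar it is in particular centrally symmetric, so that corollary gives
\[
P \ltimes_{\omega} K = \conv\left(P \times \{0\} \cup \{\pm u\} \times K\right).
\]
The hexagon $P$ has the six vertices $\pm u, \pm(1,0), \pm(0,1)$ (a one-line check confirms each is extreme), and $K$ is a polytope with vertex set $V(K)$. Hence the body is the convex hull of the finite set
\[
S := \left(\{\pm u, \pm(1,0), \pm(0,1)\} \times \{0\}\right) \cup \left(\{\pm u\} \times V(K)\right),
\]
which immediately shows $P \ltimes_{\omega} K$ is a polytope and that $V(P \ltimes_{\omega} K) \subseteq S$. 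It remains to decide exactly which points of $S$ are vertices.

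First I would check that $(\pm u, 0)$ are \emph{not} vertices: these are the only points of $S$ that must be discarded. Since $0 \in \inte K$ and $K$ is centrally symmetric, any $w \in V(K)$ satisfies $-w \in V(K)$, and then $(u,0) = \tfrac12\big((u,w)+(u,-w)\big)$ exhibits $(u,0)$ as the midpoint of two distinct points of $S$; hence it is not extreme, and likewise for $(-u,0)$.

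To finish I would show that every remaining point of $S$ is genuinely a vertex by producing, for each, a linear functional $\ell(v,x) = \langle a,v\rangle + \langle b,x\rangle$ attaining its maximum over $S$ at that point alone. For a point $(u,w_0)$ with $w_0 \in V(K)$, choose $a$ so that $\langle a,\cdot\rangle$ is maximized over $P$ only at $u$ and $b$ so that $\langle b,\cdot\rangle$ is maximized over $K$ only at $w_0$; because $0$ lies in the interior of $P$ and of $K$ one automatically gets $\langle a,u\rangle > 0$ and $\langle b,w_0\rangle > 0$. Comparing values then splits into three easy cases — the points $(v,0)$, the points $(u,w)$, and the points $(-u,w)$ — with the strict separation in the last case coming from $2\langle a,u\rangle > 0$ and in the first from $\langle b,w_0\rangle > 0$. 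Relabelling $w_0 \mapsto -w_0$ and using central symmetry then covers all of $\{\pm u\} \times V(K)$. For a point such as $((1,0),0)$, take $b = 0$ and $a$ maximized over $P$ only at $(1,0)$; every type-$B$ point then contributes the value $\langle a,\pm u\rangle$, which is strictly smaller since $\pm u$ are vertices of $P$ distinct from $(1,0)$, and the four points $\pm(1,0),\pm(0,1)$ are handled symmetrically.

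The routine parts are the case checks; the only place demanding care is verifying the strict inequalities in the mixed comparisons for $(u,w_0)$, where one must use precisely the positivity of $\langle a,u\rangle$ and $\langle b,w_0\rangle$ guaranteed by $0$ being interior to $P$ and $K$ — this is what prevents the opposite copy $\{-u\}\times V(K)$ or the base hexagon from swallowing the claimed vertices.
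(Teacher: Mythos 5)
Your proposal is correct and follows essentially the same route as the paper: both reduce via Corollary~\ref{cor:forselfpol} to the convex hull of a finite set and then verify which of its points are extreme. The paper simply absorbs $(\pm u,0)$ into $\conv(\{\pm u\}\times V(K))$ at once and asserts that the remaining points are in convex position, whereas you carry out that verification explicitly with separating functionals — a fleshed-out version of the same argument.
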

	\begin{proof}
		By Corollary~\ref{cor:forselfpol},
		\[
			P \ltimes_{\omega} K = \conv \left(P \times \{0\} \cup \{\pm e\} \times K\right).
		\]
		Hence,
		\[
			P \ltimes_{\omega} K = \conv  (\{\pm(1,0),\pm(0,1)\} \times \{0\} \cup \{\pm e\} \times V(K)).
		\]
		It is easy to check that the set inside the convex hull operation is in convex position.
	\end{proof}

	We can also derive how the volume changes under the symplectic $P$-suspension.
	
	\begin{theorem}\label{thm:volume}
		Let $X \subset \mathbb{R}^{2n}$ be a centrally symmetric convex body. Then
		\[
	      \vol P \ltimes_{\omega} X = \frac{4n+3}{(n+1)(2n+1)} \vol X.
    	\]
	\end{theorem}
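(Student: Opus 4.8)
The plan is to compute the volume by Fubini's theorem, slicing $P \ltimes_{\omega} X$ over its two-dimensional factor $P$. For a fixed $v \in P$ the fiber $\{x \in \mathbb{R}^{2n} : \|x\|_X \le 1 - |\omega(u,v)|\}$ is the dilate $(1 - |\omega(u,v)|)\, X$, so its $2n$-dimensional volume equals $(1 - |\omega(u,v)|)^{2n} \vol X$. Hence
\[
\vol P \ltimes_{\omega} X = \vol X \cdot \int_P (1 - |\omega(u,v)|)^{2n} \, dv,
\]
and the whole problem reduces to evaluating the planar integral $I := \int_P (1 - |\omega(u,v)|)^{2n} \, dv$.

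Next I would find a description of the hexagon $P$ adapted to the integrand. Writing $v = (a,b)$, we have $\omega(u,v) = b - a$, and a direct check of the six vertices shows that $P = \{(a,b) : |a| \le 1,\ |b| \le 1,\ |b - a| \le 1\}$; in particular $|\omega(u,v)| = |b-a| \le 1$ on $P$, which justifies that every fiber is nonempty and equals $(1-|\omega(u,v)|)X$. The key step is then the change of variables $s = b - a$, $a = a$, which has unit Jacobian and turns the integrand into a function of $s$ alone. For fixed $s \in [-1,1]$ the constraints $|a| \le 1$ and $|a + s| \le 1$ confine $a$ to an interval of length $2 - |s|$, so
\[
I = \int_{-1}^{1} (1 - |s|)^{2n}\,(2 - |s|)\, ds = 2 \int_0^1 (1 - s)^{2n}(2 - s)\, ds.
\]

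Finally I would evaluate this one-dimensional integral by the substitution $t = 1 - s$, which gives
\[
I = 2 \int_0^1 t^{2n}(1 + t)\, dt = 2\left(\frac{1}{2n+1} + \frac{1}{2n+2}\right) = \frac{4n+3}{(n+1)(2n+1)}.
\]
Substituting back into the Fubini formula yields $\vol P \ltimes_{\omega} X = \frac{4n+3}{(n+1)(2n+1)} \vol X$, as claimed.

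There is no serious obstacle here: the argument is a routine Fubini-plus-slicing computation. The only point demanding care is the planar bookkeeping, namely identifying $P$ with the slab description $\{|a| \le 1,\ |b| \le 1,\ |b-a| \le 1\}$ and verifying that each slice $\{b - a = s\} \cap P$ has length $2 - |s|$. Getting this cross-sectional length right is precisely what makes the final integral collapse into two elementary power integrals and produces the numerator $4n+3$.
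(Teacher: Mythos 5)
Your proposal is correct and follows essentially the same route as the paper: both slice the body over the hexagon, reduce to the planar integral of $(1-|\omega(u,v)|)^{2n}$ over $P$, find that the level set $\{\omega(u,v)=s\}\cap P$ has length $2-|s|$, and evaluate $2\int_0^1 t^{2n}(1+t)\,dt$. The only cosmetic difference is your choice of the unit-Jacobian substitution $(a,b)\mapsto(b-a,a)$ in place of the paper's $(x,y)\mapsto(y-x,\,x+y)$ with its factor of $\tfrac12$.
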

	
	\begin{proof}
		\[
      \vol P \ltimes_{\omega} X = \int_{\mathbb{R}^2} \int_{\mathbb{R}^{2n}} \scalebox{1.5}{$\chi$}_{P \ltimes_{\omega} X}(x,y,z)\; dx\, dy\, dz, 
    	\]
    	where $\scalebox{1.5}{$\chi$}_{P \ltimes_{\omega} X}$ is the indicator function of the set $P \ltimes_{\omega} X$; $x,y$ are standard coordinates in $\mathbb{R}^2$; and $z \in \mathbb{R}^{2n}$.
    	
    	We make the change of variables $y - x = t$ and $x + y = s$.
    	\begin{multline*}
      		\vol P \ltimes_{\omega} X = \frac{1}{2}\int_{\mathbb{R}^2} \int_{\mathbb{R}^{2n}} \scalebox{1.5}{$\chi$}_{P \ltimes_{\omega} X}\left(\frac{s-t}{2},\frac{s+t}{2},z\right)\; dt\, ds\, dz =\\ 
      		= \frac{1}{2} \int_{\mathbb{R}^2} \int_{\mathbb{R}^{2n}} \scalebox{1.5}{$\chi$}_{P}\left(\frac{s-t}{2},\frac{s+t}{2}\right)\cdot  \scalebox{1.5}{$\chi$}_{(1-|t|) X}(z)\; dt\, ds\, dz = \\
      		=  \frac{\vol X}{2} \int_{\mathbb{R}^2} \scalebox{1.5}{$\chi$}_{P}\left(\frac{s-t}{2},\frac{s+t}{2}\right)  \cdot (1-|t|)^{2n}\; ds\, dt =\\
      		=  \vol X \int_{0}^{1}dt\ (1-t)^{2n} \int_{t-2}^{2-t}ds =\\
      		 = 2\vol X \int_{0}^{1} (1-t)^{2n} \cdot (2-t)\; dt =  \frac{4n+3}{(n+1)(2n+1)} \vol X. 
    	\end{multline*}
	\end{proof}

	Here we would also like to discuss the affine cylindrical capacity of a convex body, which could be defined as follows (for details, see, for example~\cite{gluskin2016asymptotic}).
	\begin{definition}
		The \emph{affine cylindrical capacity} of a convex body $X \subset \mathbb{R}^{2n}$ is defined as
		\[
			c_{ZA}(X) = \inf_{\varphi} \vol \pi \left( \varphi (X) \right),
		\]
		where the infimum is taken over all linear symplectomorphisms of $\mathbb{R}^{2n}$, and the map $\pi \colon \mathbb{R}^{2n} \to \mathbb{R}^2$ is the orthogonal projection on the first two coordinates. 
	\end{definition}

	\begin{theorem}\label{thm:affine_cap}
		For any centrally symmetric convex body  $X\subset \mathbb{R}^{2n}$, the inequality $c_{ZA}(P \ltimes_{\omega} X) \leq 3$ holds. If $X = X^{\omega}$, then $c_{ZA}(P \ltimes_{\omega} X) = 3$.
	\end{theorem}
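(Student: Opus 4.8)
The plan is to establish the bound $c_{ZA}(P\ltimes_\omega X)\le 3$ directly by exhibiting one good projection, and then to obtain the matching lower bound in the self-polar case by exploiting the symplectic self-polarity of $P\ltimes_\omega X$ to reduce to a two-dimensional question about areas of symplectic shadows.

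For the upper bound I would take $\varphi=\id$ in the definition of $c_{ZA}$. Since $P\times\{0\}\subseteq P\ltimes_\omega X$ (take $x=0$, using that $|\omega(u,v)|\le 1$ for all $v\in P$) and $P\ltimes_\omega X\subseteq P\times\mathbb{R}^{2n}$, the projection $\pi$ onto the first two coordinates satisfies $\pi(P\ltimes_\omega X)=P$. A direct computation gives $\area P=3$, whence $c_{ZA}(P\ltimes_\omega X)\le\area P=3$; this step uses nothing about $X$ beyond central symmetry, which is exactly the generality claimed for the inequality.

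For the lower bound, assume $X=X^\omega$, so that $Y:=P\ltimes_\omega X$ is symplectically self-polar by Theorem~\ref{main:thm}. First I would record that self-polarity is preserved by linear symplectomorphisms: if $\varphi$ is one, then $(\varphi Y)^\omega=\varphi(Y^\omega)=\varphi Y$, because $\omega(z,\varphi w)=\omega(\varphi^{-1}z,w)$. It therefore suffices to bound from below the area of the shadow of an arbitrary self-polar body. Writing $\mathbb{R}^{2(n+1)}=\mathbb{R}^2\oplus W$ with the block form $\omega=\omega_2\oplus\omega_W$, and using that $\omega(z,\iota\xi)=\omega_2(\pi z,\xi)$ for $\xi\in\mathbb{R}^2$, one obtains the shadow--slice duality $Z^\omega\cap\mathbb{R}^2=(\pi Z)^{\omega_2}$ for any centrally symmetric $Z$. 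Applied to a self-polar $Z=\varphi Y$ and combined with the obvious inclusion $Z\cap\mathbb{R}^2\subseteq\pi Z$, this yields, with $S:=\pi Z$, the relation $S^{\omega_2}\subseteq S$.

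The whole lower bound thus rests on the following planar statement, which I expect to be the main obstacle: every centrally symmetric convex $S\subset\mathbb{R}^2$ with $S^{\omega_2}\subseteq S$ satisfies $\area S\ge 3$. The constraint says precisely that $S$ contains its symplectic polar; its boundary case is symplectic self-polarity, for which $\area S\ge 3$ is already known (\cite[Corollary 5.2]{berezovik2022symplectic}). I would reduce the containment case to this equality case by producing a self-polar body sandwiched between $S^{\omega_2}$ and $S$: the symplectic polarity $A\mapsto A^{\omega_2}$ is a continuous, inclusion-reversing involution of the compact family $\{A:S^{\omega_2}\subseteq A\subseteq S\}$ into itself (it interchanges $S$ and $S^{\omega_2}$), so a fixed point $S'=S'^{\omega_2}$ would be self-polar and give $\area S\ge\area S'\ge 3$. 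The delicate point is extracting an honest body-level fixed point rather than merely a body of the right area: passing to areas and invoking Mahler's inequality $\area S\cdot\area S^\circ\ge 8$ together with $\area S^{\omega_2}=\area S^\circ\le\area S$ only yields $\area S\ge 2\sqrt2$, and the naive gauge-interpolation path between $S$ and $S^{\omega_2}$ equalizes the two gauges in each direction separately but not simultaneously. A genuine fixed-point argument (Brouwer/Schauder type) on the space of bodies, or an explicit variational analysis of the minimizer as carried out for self-polar hexagons, is therefore needed. Granting the planar statement, $\area(\pi\varphi Y)\ge 3$ for every linear symplectomorphism $\varphi$, so $c_{ZA}(Y)\ge 3$, and together with the upper bound this gives $c_{ZA}(Y)=3$.
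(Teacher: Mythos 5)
Your upper bound is exactly the paper's argument: $P\ltimes_{\omega}X$ is contained in $P\times\mathbb{R}^{2n}$ and contains $P\times\{0\}$, so its projection onto the first two coordinates is $P$, of area $3$, and taking $\varphi=\id$ gives $c_{ZA}(P\ltimes_{\omega}X)\le 3$. For the lower bound, however, the paper does not argue at all: it simply invokes \cite[Theorem 6.3]{berezovik2022symplectic}, which says that every symplectically self-polar body has affine cylindrical capacity at least $3$, and combines it with Theorem~\ref{main:thm}. You instead set out to reprove that cited result. Your reduction is sound: linear symplectomorphisms preserve self-polarity, the shadow--slice identity $Z^{\omega}\cap\mathbb{R}^2=(\pi Z)^{\omega_2}$ is correct for the splitting $\mathbb{R}^{2n+2}=\mathbb{R}^2\oplus W$, and it does show that every shadow $S=\pi(\varphi Y)$ of a self-polar $Y$ satisfies $S^{\omega_2}\subseteq S$.

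The problem is that you then leave the decisive planar statement --- that $S^{\omega_2}\subseteq S$ forces $\area S\ge 3$ --- unproved, and you say so yourself (``granting the planar statement''). That is a genuine gap, and the routes you sketch do not close it: Mahler only yields $2\sqrt{2}$, and an inclusion-reversing involution of an order interval has no fixed point for free (it can act as a two-element orbit). The gap is fillable with tools already present in the references: apply the Zorn's-lemma construction of \cite[Lemma 6.6]{berezovik2022symplectic} (the same device quoted before Lemma~\ref{lem:increase}) to $K=S^{\omega_2}$, which satisfies $K\subseteq K^{\omega_2}=S$. Every body produced along the way satisfies $K\subseteq K'\subseteq (K')^{\omega_2}\subseteq S$, so a maximal such $K'$ is a symplectically self-polar planar body $S'$ sandwiched between $S^{\omega_2}$ and $S$, whence $\area S\ge\area S'\ge 3$ by \cite[Corollary 5.2]{berezovik2022symplectic}. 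With that step supplied your argument is a legitimate self-contained alternative to the paper's citation; without it, the equality claim in the self-polar case is not established. The economical fix is simply to cite \cite[Theorem 6.3]{berezovik2022symplectic} as the paper does.
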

	\begin{proof}
		Note that the orthogonal projection on the first two coordinates equals $P$, and $\vol P = 3$. The first part of the lemma follows from this.  The second part follows from~\cite[Theorem 6.3]{berezovik2022symplectic} which asserts that $c_{ZA}(X) \geq 3$ if $X^\omega \subseteq X$.
	\end{proof}

	\section{Symplectically self-polar polytopes of minimal capacity}\label{chapter:polytope_min_capacity}
  	\begin{definition}
  		We inductively define the sequence of bodies $P^{\ltimes n} = P \ltimes_{\omega} P^{\ltimes n-1}$ for any positive integer $n > 1$, where $P^{\ltimes 1} = P$.
  	\end{definition}

  	\begin{theorem}\label{thm:prop_P}
  		$P^{\ltimes n}$ is a sequence of symplectically self-polar convex polytopes
  		\begin{enumerate}
  			\item with volume
  			\[
  				\vol P^{\ltimes n} = \frac{2^n}{n!} \cdot \frac{\Gamma\left(n+\frac{3}{4}\right)}{\Gamma\left(n+\frac{1}{2}\right)} \cdot \frac{\Gamma\left(\frac{1}{2}\right)}{\Gamma\left(\frac{3}{4}\right)},
  			\]
  			(Here $\Gamma(x)$ is the gamma function)
  			\item with number of vertices
  			\[
  				|V(P^{\ltimes n})| = 10\cdot \left(2^{n-1} - 1\right) + 6,
  			\]
  			\item with affine cylindrical capacity
  			\[
  				c_{ZA}(P^{\ltimes n}) = 3.
  			\]
  		\end{enumerate}
  	\end{theorem}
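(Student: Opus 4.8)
The plan is to prove all three properties simultaneously by induction on $n$, feeding each structural result from Section~\ref{chapter:increase_dim} into the inductive step. The base case $n=1$ is the hexagon $P^{\ltimes 1} = P$, which is symplectically self-polar by hypothesis, is a polytope, has $\vol P = 3$ and $|V(P)| = 6$, and satisfies $c_{ZA}(P) = 3$ trivially: in $\mathbb{R}^2$ the projection $\pi$ is the identity and every linear symplectomorphism is area-preserving, so $c_{ZA}(P) = \inf_\varphi \vol \varphi(P) = \vol P = 3$. For the inductive step I would assume the statement for $P^{\ltimes n-1} \subset \mathbb{R}^{2(n-1)}$ and deduce it for $P^{\ltimes n} = P \ltimes_{\omega} P^{\ltimes n-1}$.

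The qualitative claims propagate at once. Since $P^{\ltimes n-1}$ is symplectically self-polar, Theorem~\ref{main:thm} gives that $P^{\ltimes n}$ is symplectically self-polar; since it is a polytope, Corollary~\ref{cor:vertices} shows that $P^{\ltimes n}$ is again a polytope and records its vertex set explicitly. From that corollary the vertex count obeys $|V(P^{\ltimes n})| = 4 + 2|V(P^{\ltimes n-1})|$, because the two sets appearing there are disjoint (their $\mathbb{R}^2$-components lie in $\{\pm(1,0),\pm(0,1)\}$ and $\{\pm u\}$ respectively). Solving this linear recursion with base value $6$ yields $|V(P^{\ltimes n})| = 5\cdot 2^{n} - 4 = 10(2^{n-1}-1)+6$, as claimed.

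For the volume there is a dimension-shift subtlety worth flagging: in Theorem~\ref{thm:volume} the parameter $n$ denotes the half-dimension of the suspended body, so applying it to $X = P^{\ltimes n-1} \subset \mathbb{R}^{2(n-1)}$ amounts to substituting $n \mapsto n-1$, giving the recursion factor $\frac{4(n-1)+3}{((n-1)+1)(2(n-1)+1)} = \frac{4n-1}{n(2n-1)}$. It then remains only to check that the proposed closed form satisfies this recursion and the base value. Using $\Gamma(z+1) = z\,\Gamma(z)$, the ratio of the formula at $n$ to its value at $n-1$ factors as $\frac{2}{n} \cdot \frac{n-1/4}{n-1/2} = \frac{4n-1}{n(2n-1)}$, matching the factor above, while at $n=1$ the identities $\Gamma(7/4) = \tfrac34\Gamma(3/4)$ and $\Gamma(3/2) = \tfrac12\Gamma(1/2)$ collapse the expression to $3$.

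Finally, for the capacity: because $P^{\ltimes n-1}$ is symplectically self-polar, the hypothesis $X = X^{\omega}$ of the affine-cylindrical-capacity theorem of Section~\ref{chapter:increase_dim} is satisfied, so $c_{ZA}(P^{\ltimes n}) = 3$ for every $n \geq 2$; together with the base case this gives the claim for all $n$. The argument has no genuine obstacle, since every step is a direct invocation of an already-proved result; the only point demanding care is the bookkeeping in the volume recursion — correctly tracking the dimension shift and confirming the gamma-function identity — which is precisely where an off-by-one slip would most easily occur.
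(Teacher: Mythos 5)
Your proof is correct and follows exactly the route the paper intends: the paper states Theorem~\ref{thm:prop_P} as a direct deduction "from the previous chapter" without writing out the details, and your induction simply makes explicit the invocation of Theorem~\ref{main:thm}, Corollary~\ref{cor:vertices}, Theorem~\ref{thm:volume} (with the correct $n\mapsto n-1$ shift), and the $c_{ZA}$ result. The recursion $|V(P^{\ltimes n})| = 2|V(P^{\ltimes n-1})|+4$ and the gamma-function checks are all accurate.
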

  	
  	\begin{proof}
  		The fact that $P^{\ltimes n}$ is a symplectically self-polar convex polytope follows from Theorem~\ref{main:thm}, Corollary~\ref{cor:vertices}, and the fact that $P$ is a symplectically self-polar convex polytope.

  		Item (1). First, note that
  		\[
  				\frac{2^1}{1!} \cdot \frac{\Gamma\left(1+\frac{3}{4}\right)}{\Gamma\left(1 + \frac{1}{2}\right)}\cdot\frac{\Gamma\left(\frac{1}{2}\right)}{\Gamma\left(\frac{3}{4}\right)} = 2 \cdot \frac{3/4}{1/2} = 3 = \vol P.
  		\]
  		Second, the statement of the induction step follows from Theorem~\ref{thm:volume}. Indeed,
  		\begin{align*}
  				\vol P^{\ltimes n+1} &= \frac{4n+3}{(n+1)(2n+1)} \cdot \vol P^{\ltimes n} = \frac{4n+3}{(n+1)(2n+1)} \cdot \frac{2^n}{n!} \cdot \frac{\Gamma\left(n+\frac{3}{4}\right)}{\Gamma\left(n+\frac{1}{2}\right)} \cdot \frac{\Gamma\left(\frac{1}{2}\right)}{\Gamma\left(\frac{3}{4}\right)} =\\
  				&= \frac{2^{n+1}}{(n+1)!} \cdot \frac{\left(n + \frac{3}{4}\right) \cdot \Gamma\left(n+\frac{3}{4}\right)}{\left(n + \frac{1}{2}\right) \cdot \Gamma\left(n+\frac{1}{2}\right)} \cdot \frac{\Gamma\left(\frac{1}{2}\right)}{\Gamma\left(\frac{3}{4}\right)} = \frac{2^{n+1}}{(n+1)!} \cdot \frac{\Gamma\left((n+1)+\frac{3}{4}\right)}{\Gamma\left((n+1)+\frac{1}{2}\right)} \cdot \frac{\Gamma\left(\frac{1}{2}\right)}{\Gamma\left(\frac{3}{4}\right)}.
  		\end{align*}

  		Item (2). Since $|V(P)| = 6$, by Corollary~\ref{cor:vertices}
  		\[
  			|V(P^{\ltimes n+1})| = 4 + 2\cdot  |V(P^{\ltimes n})|.
  		\]
  		Thus,
  		\[
  			|V(P^{\ltimes n+1})| = 4 + 2\cdot \left(10\cdot\left(2^{n-1} - 1\right) + 6\right) = 10 \cdot \left(2^{n} - 1\right) + 6.
  		\]

  		Item (3). Formula for the capacity follows from Theorem~\ref{thm:affine_cap}.
  	\end{proof}

  	\begin{remark}
  		A direct calculation shows that the sequence of bodies $P^{\ltimes n}$ is an example of a sequence of centrally symmetric convex bodies, such that
  		\[
  			\vol (P^{\ltimes n}) < \frac{c_{ZA}(P^{\ltimes n})}{n!}
  		\]
  		for $n \geq 2$.  It follows that one cannot just prove a stronger version of centrally symmetric Viterbo's conjecture for the cylindrical affine capacity.
  	\end{remark}

  	\begin{remark}
  		It seems that polytopes $P^{\ltimes n}$ for $n \geq 2$ never appeared in literature before. Because, for example, polytope $P^{\ltimes 2}$ has 16 vertices, 44 edges, 44 two-dimensional faces, and 16 three-dimensional faces and we did not find any famous 4-polytope with the same number of faces. 
  	\end{remark}
   
	Before we start proving Theorem~\ref{main:cehz} we need to make several additional statements.
	\begin{lemma}\label{lem:induction}
		For any positive integer $n$ there is a collection of pairwise distinct vertices $v_1, \ldots, v_{2n+1}$ of the polytope $P^{\ltimes n}$, which does not contain two opposite vertices, such that
		\[
			\omega(v_i,v_j) = 1
		\]
		for all $1 \leq i < j \leq 2n+1$.
	\end{lemma}
	\begin{proof}
		We prove the theorem by induction on $n$. For the case $n = 1$, we can consider the following vertices $v_1 = (1,0),\ v_2 = (1,1),\ v_3 = (0,1)$ in $\mathbb{R}^2$. For the induction step, assume that there is such collection of vertices $v_1, \ldots, v_{2n+1}$. Consider $w_{i} = e \oplus v_i$ for $1\leq i \leq 2n+1$ and $w_{2n+2} = (0,1) \oplus 0$, $w_{2n+3} = (-1,0) \oplus 0$. From Corollary~\ref{cor:vertices} it follows that $w_1, \ldots, w_{2n+3}$ are vertices of  polytope $P^{\ltimes n+1}$. It is easy to check the other properties of this collection.
	\end{proof}
	
	\begin{remark}
		This property of the polytopes $P^{\ltimes n}$ is extreme in the following sense. In $\mathbb{R}^{2n}$ there are no $2n+2$ pairwise distinct points $v_1, \ldots v_{2n+2}$ such that $\omega(v_i,v_j) = 1$ for $i < j$. Indeed, assume the opposite. Every equation $\omega(v_i, \cdot) = 1$ defines an affine hyperplane $L_i$. Let $M_i = L_1 \cap \ldots \cap L_i$, and note that $v_{i+1} \in M_i$, but $v_{i+1} \notin L_{i+1}$; hence, the inclusions $M_1 \supset M_2 \supset \ldots \supset M_{2n}$ are strict and $M_{2n}$ contains no more than one point. It contradicts $v_{2n+1},v_{2n+2} \in M_{2n}$.
	\end{remark}	

	Consider the following theorem:
	\begin{theorem}[Haim-Kislev,~\cite{haim2019symplectic}]\label{theorem:haim}
		Let $K\subset\mathbb{R}^{2n}$ be a centrally symmetric convex polytope with a nonempty interior, $n_1, \ldots,n_{m}$ are different outward normal vectors to  one-half of its facets ($-n_1, \ldots, -n_{m}$ to the other half). Then
		\[
			\frac{1}{c_{EHZ}(K)} = \max \left\{ \sum_{1 \leq i < j \leq m} \beta_{\sigma(i)} \beta_{\sigma(j)} \omega\left(n_{\sigma(i)},n_{\sigma(j)}\right) \right\},
		\]
		where the maximum is taken over all permutations $\sigma\in S_{m}$ and all sets of reals $(\beta_i)_{i = 1}^m$ such that
		\[
        \sum_{i=1}^{m} |\beta_i| h_K(n_i) = 1.
      \]
		(Here $  	h_K(n_i) = \max \{\langle x,n_i\rangle \ |\ x \in K \} $ denotes the support function of $K$.)
	\end{theorem}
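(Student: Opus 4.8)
The plan is to reprove the formula from the variational (Clarke-dual) description of the EHZ capacity and to reduce the resulting infinite-dimensional minimization to the finite combinatorial optimization on the right-hand side. The starting point is the standard characterization of $c_{EHZ}(K)$ for a convex body $K$ with $0 \in \inte K$ as the minimal action of a closed characteristic on $\partial K$, which by the Clarke dual action principle can be written as the minimum of the action functional $\mathcal{A}(z) = \frac{1}{2}\int_0^1 \omega(z(t),\dot z(t))\,dt$ over an appropriate class of loops whose velocity is governed by the gauge function of $K$. I would record this formula precisely, together with the homogeneity of $\mathcal{A}$ under scaling of the loop, since it is this homogeneity that will let the constrained minimal-action problem be rewritten as the \emph{reciprocal} maximization appearing in the statement.

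Next I would exploit the polytopal structure of $K$. On the relative interior of a facet $F_i$ contained in $\{x : \langle n_i,x\rangle = h_K(n_i)\}$, the gradient of the gauge function is the constant vector $n_i/h_K(n_i)$, so the Hamiltonian vector field is parallel to $J n_i$. Consequently every closed characteristic on $\partial K$ is a closed piecewise-linear loop whose successive edges point along the vectors $J n_i$. Such a loop is encoded by the cyclic order in which it meets the facets, giving a permutation $\sigma \in S_m$, together with the signed time spent on each visited facet, giving reals $\beta_i$ whose sign records whether the facet with normal $n_i$ or its antipode $-n_i$ is used. The normalization $\sum_i |\beta_i|\,h_K(n_i) = 1$ is exactly the closing-up condition fixing the size of the loop, i.e. it packages the support-function relations $\langle n_i,\cdot\rangle \le h_K(n_i)$ together with the prescribed period.

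I would then compute $\mathcal{A}$ on such a piecewise-linear loop explicitly. Writing the vertices of the loop as the partial sums of the edge vectors $\beta_{\sigma(j)} J n_{\sigma(j)}$ and inserting them into the action integral, the integral collapses to a finite ``shoelace'' double sum; using the antisymmetry of $\omega$ and the closing condition this simplifies to
\[
  \sum_{1\le i<j\le m}\beta_{\sigma(i)}\beta_{\sigma(j)}\,\omega\!\left(n_{\sigma(i)},n_{\sigma(j)}\right).
\]
Optimizing over the scaling via the homogeneity noted above, and then over all orderings $\sigma$ and admissible signed weights $(\beta_i)$, converts the minimal action into the reciprocal $1/c_{EHZ}(K)$ and yields precisely the asserted maximum.

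The main obstacle is the reduction step that makes the optimization finite: one must show rigorously that a minimizing closed characteristic may be taken to be piecewise linear and to use each antipodal pair of facets at most once, so that a single signed real $\beta_i$ per pair and a permutation of the $m$ indices suffice. This requires a structure/regularity argument for minimizers of the dual action on a polytope---essentially that re-entering a facet never decreases the action---together with careful sign bookkeeping to confirm that signed $\beta_i$ over the half-set $n_1,\dots,n_m$ captures all relevant loops, using the central symmetry of $K$. Once this correspondence between loops and pairs $(\sigma,\beta)$ is established, the action computation and the passage to the reciprocal are routine.
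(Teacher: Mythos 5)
Your proposal targets a theorem the paper does not prove at all: it is quoted verbatim (in a centrally symmetric, signed form) from Haim-Kislev~\cite{haim2019symplectic}, so the comparison is with her proof. Your outline does follow her actual strategy --- Clarke dual action principle, reduction to piecewise-linear loops with velocities along the $Jn_i$, a finite optimization over visiting orders and dwell times --- but the step you flag as ``the main obstacle'' (that a minimizer may be taken piecewise linear, visiting each facet pair at most once) is not a regularity footnote: it is the central technical content of Haim-Kislev's paper, proved by a delicate combinatorial swapping/merging argument on the discretized dual action showing that repeated visits to a facet can be consolidated without decreasing the relevant quadratic form. Acknowledging it without an argument leaves the core of the theorem unproven, so as it stands this is a road map, not a proof. (Also, on a polytope the boundary is nonsmooth, so ``closed characteristic'' must be taken in the generalized sense; on faces of codimension at least two the velocity cone is spanned by several $Jn_i$, which is part of why the piecewise-linear reduction needs real work.)

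Beyond the acknowledged gap there is a concrete error: you assert that the normalization $\sum_i |\beta_i|\,h_K(n_i)=1$ ``is exactly the closing-up condition.'' It is not --- it only fixes the scale/period, while closing of the piecewise-linear loop is the separate linear constraint $\sum_i \beta_i J n_i = 0$. In Haim-Kislev's general formula this constraint appears explicitly in the admissible set, and it cannot be dropped: for the square $K=[-1,1]^2$ with normals $\pm(1,0),\pm(0,1)$, maximizing the unsigned quadratic form without the closing constraint (e.g.\ putting weight $\tfrac12$ on each of $(0,-1)$ and $(1,0)$) yields $c_{EHZ}=2$ instead of the correct value $4=\area(K)$. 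The statement you are proving has no closing constraint precisely because of central symmetry: a signed vector $(\beta_i)$ over \emph{half} the normals encodes a symmetric loop satisfying $z(t+T/2)=-z(t)$, whose second half is the antipodal image of the first and hence closes automatically for \emph{every} admissible $(\sigma,\beta)$; conversely one must show the optimum of the general constrained problem is attained on such symmetric data. Your sketch never performs this symmetrization, and note that genuine minimizers can use both facets of an antipodal pair (the boundary of the square does), which a single signed $\beta_i$ per pair cannot encode without that argument. So the passage from the constrained maximization your construction naturally produces to the unconstrained signed maximum in the statement is a genuinely missing idea, not ``careful sign bookkeeping.''
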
 

	It is well known that for every convex polytope $K$ with an origin in its interior (not necessary centrally symmetric) there exists a one-to-one correspondence between the facets of $K$ and the vertices of $K^\circ$. Let $n$ be an outward normal vector to some facet in $K$, then the corresponding vertex is given by $n/h_{K}(n)$.  

	When $K = K^{\omega} = JK^\circ$, there is a one-to-one correspondence between the facets of $K$ and the vertices of $K$. For a given facet the corresponding vertex has the form $Jn/h_{K}(n)$, where $n$ is the outward normal vector. Using this fact and equality $\omega(Ja,Jb) =  \omega(a,b)$, we obtain 
	\begin{corollary}\label{coro:cehz_self}
		Let $K\subset\mathbb{R}^{2n}$ be a centrally symmetric convex polytope with a nonempty interior such that $K^{\omega} = K$, $v_1, \ldots, v_{m}$ is one-half of its vertices (the other half is $-v_1, \ldots, -v_{m}$). Then
		\[
			\frac{1}{c_{EHZ}(K)} = \max \left\{ \sum_{1 \leq i < j \leq m} \gamma_{\sigma(i)} \gamma_{\sigma(j)} \omega\left(v_{\sigma(i)},v_{\sigma(j)}\right) \right\},
		\]
		where the maximum is taken over all permutations $\sigma\in S_{m}$ and all sets of reals $(\gamma_i)_{i = 1}^m$ such that 
		\[
			\sum_{i=1}^{m} |\gamma_i| = 1.
		\]
	\end{corollary}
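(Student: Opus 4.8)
The plan is to derive the corollary from Theorem~\ref{theorem:haim} by a single change of variables, once the correspondence between the outer facet normals of $K$ and the vertices of $K = K^\omega$ is pinned down. The symplectic form is $\omega(x,y) = \langle Jx, y\rangle$ for the standard complex structure $J$ (with $J^2 = -\id$ and $J^T = -J$), so unwinding the definition of the symplectic polar gives $X^\omega = (JX)^\circ = JX^\circ$; in particular $K = K^\omega$ is equivalent to $K^\circ = J^{-1}K$. First I would combine this identity with the ordinary facet--vertex duality for polars: if $n_1,\dots,n_m,-n_1,\dots,-n_m$ are the outer normals of the facets of $K$, then $n_i/h_K(n_i)$ are exactly the vertices of $K^\circ$, hence $v_i := Jn_i/h_K(n_i)$ are exactly the vertices of $JK^\circ = K$. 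Central symmetry of $K$ gives $h_K(-n_i) = h_K(n_i)$, so $-n_i \mapsto -v_i$ and the chosen half $n_1,\dots,n_m$ of the normals corresponds bijectively to a half $v_1,\dots,v_m$ of the vertices containing no antipodal pair.

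With the correspondence in hand, I would substitute into the Haim-Kislev formula. Writing $h_i = h_K(n_i) > 0$ we have $n_i = h_i J^{-1} v_i$, so bilinearity together with the invariance $\omega(Ja,Jb) = \omega(a,b)$ yields
\[
	\omega(n_i,n_j) = h_i h_j\, \omega(J^{-1}v_i, J^{-1}v_j) = h_i h_j\, \omega(v_i,v_j).
\]
Next I would pass to the new variables $\gamma_i := \beta_i h_i$. Since every $h_i > 0$, the map $(\beta_i) \mapsto (\gamma_i)$ is a bijection on real tuples, the constraint $\sum_i |\beta_i| h_K(n_i) = 1$ becomes exactly $\sum_i |\gamma_i| = 1$, and for each permutation $\sigma$ the generic summand transforms as $\beta_{\sigma(i)}\beta_{\sigma(j)}\,\omega(n_{\sigma(i)},n_{\sigma(j)}) = \gamma_{\sigma(i)}\gamma_{\sigma(j)}\,\omega(v_{\sigma(i)},v_{\sigma(j)})$. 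Hence the two optimization problems share the same feasible set and objective, so their maxima coincide, which is precisely the asserted formula.

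The bilinear substitution and the rewriting of the constraint are routine; the one point that genuinely needs care --- and which I regard as the main obstacle --- is establishing the facet--vertex correspondence with the correct sign convention, i.e.\ verifying $X^\omega = JX^\circ$ (and not $J^{-1}X^\circ$) under the paper's convention $\omega(a,b) = a_1b_2 - a_2b_1$ and applying the polarity duality in the right direction. Reassuringly, the global sign is ultimately harmless: replacing every $v_i$ by $-v_i$ leaves both $\sum_i|\gamma_i|$ and each $\omega(v_i,v_j)$ unchanged and, by central symmetry, still produces a legitimate half of the vertex set. So once the correspondence is set up, the remainder of the argument is purely formal.
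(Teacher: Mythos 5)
Your proposal is correct and follows exactly the route the paper takes (which it only sketches in one sentence): the correspondence $v_i = Jn_i/h_K(n_i)$, the invariance $\omega(Ja,Jb)=\omega(a,b)$, and the substitution $\gamma_i = \beta_i h_K(n_i)$ turning the constraint $\sum_i|\beta_i|h_K(n_i)=1$ into $\sum_i|\gamma_i|=1$. Your more careful verification of $X^\omega = JX^\circ$ and of the sign conventions is a welcome expansion of what the paper leaves implicit, but it is not a different argument.
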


	\begin{proof}[Proof of Theorem~\ref{main:cehz}]
		We need to show that $c_{EHZ}(P^{\ltimes n}) = 2 + 1/n$. Consider the collection of vertices $v_1, \ldots, v_{2n+1}$ of the polytope $P^{\ltimes n}$ from Lemma~\ref{lem:induction}. Let $\gamma_i = 1/(2n+1)$ for $1 \leq i \leq 2n + 1$. Then
		\[
			\sum_{i=1}^{2n+1} \gamma_i = 1
		\]
		and
		\[
			\sum_{1\leq i < j \leq 2n+1} \gamma_i \gamma_j \omega(v_i,v_j) = \frac{1}{(2n+1)^2} \cdot \frac{(2n+1)^2 - (2n+1)}{2} = \frac{n}{2n+1}.
		\]
		From Corollary~\ref{coro:cehz_self} it follows that
		\[
        c_{EHZ}(P^{\ltimes n}) \leq 2 + \frac{1}{n}.
      \]
      The opposite inequality is true for every symplectically self-polar convex body~(inequality~\eqref{eq:cehz_selfpol}).
	\end{proof}

	In fact, we can deduce a more general statement about changing of capacity for symplectic $P$-suspension of symplectically self-polar polytope.
	
	\begin{theorem}
		Let $K \subset \mathbb{R}^{2n}$ be a symplectically self-polar polytope, then
		\begin{equation}\label{eq:cehz_P_razd}
        	c_{EHZ}(P \ltimes_{\omega} K) \leq 3 - \frac{1}{c_{EHZ}(K)-1}.
      	\end{equation}
	\end{theorem}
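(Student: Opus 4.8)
The plan is to produce a lower bound on $1/c_{EHZ}(P \ltimes_{\omega} K)$ by exhibiting one explicit admissible configuration in the variational formula of Corollary~\ref{coro:cehz_self}, and then to optimize a single free parameter. Write $C = c_{EHZ}(K)$ and $c = 1/C$. Since $K \subset \mathbb{R}^{2n}$ is symplectically self-polar, inequality~\eqref{eq:cehz_selfpol} gives $C > 2$, hence $c < 1/2$; this sign constraint is what will make all the computations below go the right way.

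First I would fix an optimal configuration for $K$ itself: half-vertices $v_1, \dots, v_m$ and reals $\gamma_1, \dots, \gamma_m$ with $\sum_i |\gamma_i| = 1$ realizing $\sum_{i<j} \gamma_i \gamma_j \omega(v_i, v_j) = c$, relabeled so that the optimal permutation is the identity. A small but essential observation is that one may assume $\gamma_i \geq 0$ for all $i$: replacing a representative $v_i$ by its antipode $-v_i$ and simultaneously $\gamma_i$ by $-\gamma_i$ leaves every product $\gamma_i \gamma_j \omega(v_i, v_j)$ and the constraint unchanged, since both the weight and the vector flip sign. Thus we may take $\sum_i \gamma_i = 1$.

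Next I would lift this to $P \ltimes_{\omega} K$ using Corollary~\ref{cor:vertices}. As candidate half-vertices take $a_1 = ((1,0),0)$, $a_2 = ((0,1),0)$ and $b_i = (u, v_i)$ for $1 \le i \le m$, completing to a full half-vertex set by assigning weight zero to all remaining vertices. Because the ambient form splits as $\omega((p,x),(q,y)) = \omega_{\mathbb{R}^2}(p,q) + \omega(x,y)$ and $u = (1,1)$, one computes directly $\omega(a_1, a_2) = 1$, $\omega(a_1, b_i) = 1$, $\omega(a_2, b_i) = -1$, and $\omega(b_i, b_j) = \omega(v_i, v_j)$. I would then assign weights $p$ to $a_1$, $p$ to $a_2$, and $q\gamma_i$ to $b_i$, with $2p + q = 1$ and $p, q \ge 0$, so that $\sum|\gamma| = 1$ holds. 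The crucial point is the choice of ordering: taking the order $a_1, b_1, \dots, b_m, a_2$ places $a_1$ before every $b_i$ (each $a_1$--$b_i$ term then contributes $+pq\gamma_i$) and $a_2$ after every $b_i$ (since $\omega(b_i, a_2) = +1$, each such term again contributes $+pq\gamma_i$), while keeping the $b_i$ in the optimal $K$-order so that the $b$--$b$ block contributes exactly $q^2 c$. Using $\sum_i \gamma_i = 1$ this yields the objective value
\[
  S(p) = p^2 + 2pq + q^2 c, \qquad q = 1 - 2p.
\]

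Finally I would maximize $S(p)$. Since $c < 1/2$ the coefficient of $p^2$ equals $4c - 3 < 0$, so $S$ is a downward parabola with interior maximum at $p^* = (2c-1)/(4c-3) \in [0, 1/2]$, and a short computation gives $S(p^*) = c + (1-2c)^2/(3-4c)$. Substituting $c = 1/C$ and simplifying reduces this to $S(p^*) = (C-1)/(3C-4)$, whence
\[
  c_{EHZ}(P \ltimes_{\omega} K) \le \frac{1}{S(p^*)} = \frac{3C-4}{C-1} = 3 - \frac{1}{C - 1},
\]
which is exactly~\eqref{eq:cehz_P_razd}. As a consistency check, when $K = P^{\ltimes n-1}$ one has $C = 2 + 1/(n-1)$ and the bound collapses to $2 + 1/n$, recovering Theorem~\ref{main:cehz}. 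The main obstacle -- really the only delicate point -- is the bookkeeping of signs: one must verify that the chosen ordering makes both families of $a$--$b$ couplings contribute positively, and that the reduction to nonnegative $\gamma_i$ via antipodal substitution is legitimate inside the half-vertex formulation. Everything after that is a one-variable optimization.
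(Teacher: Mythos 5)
Your proposal is correct and is essentially the paper's own argument: the same lift of an optimal configuration for $K$ via Corollary~\ref{cor:vertices}, the same two extra weight-$\alpha$ vertices of $P\times\{0\}$, and the same quadratic in one parameter, with your optimal $p^*=(2c-1)/(4c-3)$ equal to the paper's $\alpha=(c_{EHZ}(K)-2)/(3c_{EHZ}(K)-4)$. The only cosmetic differences are that the paper chooses the representatives $(0,1)\oplus 0$ and $(-1,0)\oplus 0$ so that every cross-pairing is $+1$ without any reordering, and states the optimal weight directly instead of deriving it by maximizing the parabola.
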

	
	\begin{proof}
		By Corollary~\ref{coro:cehz_self},
		\[
        	\frac{1}{c_{EHZ}(K)} = \sum_{1 \leq i < j \leq m} \beta_i \beta_j \omega(v_i,v_j),
      	\]
      	where $v_1, \ldots, v_{m}$ is one-half of its vertices (the other is $-v_1, \ldots, -v_{m}$) and $\{\beta_i\}_{i=1}^m$ is a collection of nonnegative numbers such that
      	\[
      		\sum_{i=1}^m \beta_i = 1.
      	\]
      	Consider the collection of points $w_i = e \oplus v_i$ for $1 \leq i \leq m$ and $w_{m+1} = (0,1) \oplus 0$, $w_{m+2} = (-1,0) \oplus 0$. By Corollary~\ref{cor:vertices} they are vertices of $P \ltimes_{\omega} K$. Moreover, this is a collection of pairwise distinct points which does not contain two opposite vertices and $\omega(w_i,w_{m+1}) = \omega(w_i,w_{m+2}) = \omega(w_{m+1},w_{m+2}) = 1$ for $1 \leq i \leq m$ and $\omega(w_i,w_j) = \omega(v_i,v_j)$ for $1\leq i,j \leq m$. Introduce the following coefficient:
      	\[
        	\alpha = \frac{c_{EHZ}(K) - 2}{3c_{EHZ}(K) - 4},
      	\]
      	and note that $0 < \alpha < 1/2$, because $c_{EHZ}(K) > 2$. Consider the collection of coefficients $\{\gamma_i\}_{i=1}^{m+2}$ defined as follows:
      	\[
        	\gamma_i = (1 - 2\alpha) \beta_i
      	\]
      	for $1 \leq i \leq m$, and
      	\[
      		\gamma_{m+1} = \gamma_{m+2} = \alpha.
      	\]
      	Then $\gamma_i \geq 0$, and
      	\[
      		\sum_{i=1}^{m+2}\gamma_i = 1.
      	\]
      	Also note that
      	\begin{align*}
        	\sum_{1 \leq i < j \leq m+2}\gamma_i \gamma_j \omega(w_i,w_j) &= (1 - 2\alpha)^2 \sum_{1 \leq i < j \leq m}\beta_i \beta_j \omega(v_i,v_j) +2 \alpha(1-2\alpha) \sum_{1 \leq i  \leq m}\beta_i + \alpha^2 =\\
       		 &= \frac{(1-2\alpha)^2}{c_{EHZ}(K)} + 2\alpha(1-2\alpha) + \alpha^2.
     	 \end{align*}
     	 Using the definition of $\alpha$, we get
     	 \[
     	 	\sum_{1 \leq i < j \leq m+2}\gamma_i \gamma_j \omega(w_i,w_j) = \frac{c_{EHZ}(K) - 1}{3c_{EHZ}(K) - 4}.
     	 \]
     	 Applying Corollary~\ref{coro:cehz_self} to the polytope $P \ltimes_{\omega} K$, we finish the proof.
	\end{proof}

	\begin{question}
		For the polytopes $P^{\ltimes n}$ inequality~\eqref{eq:cehz_P_razd} is actually an equality. Is there a symplectically self-polar polytope such that inequality~\eqref{eq:cehz_P_razd} is strict for it?
	\end{question}
	
	\section{Symplectically self-polar bodies of minimal volume}\label{chapter:min_vol}
	
	In this section we discuss the conjecture that the bodies $P^{\ltimes n}$ have the minimal volume among the symplectically self-polar convex bodies. 

	Let us start with the two-dimensional case. The fact that Ekeland--Hofer--Zehnder capacity coincides with the volume of a convex body in dimension $2$ and Theorem~\ref{theorem:akopyan_karasev} imply that $\vol(X) \geq 3$ for a symplectically self-polar convex body $X \subset \mathbb{R}^2$. Therefore, $P$ has the minimal volume among all two-dimensional symplectically self-polar convex bodies. 

	The next theorem, motivated by the referee's question, shows that $P$ is the only minimizer (up to linear symplectomorphism).

	\begin{theorem}\label{thm:2_dim_unique_min_vol}
		Let $X \subset \mathbb{R}^2$ be a symplectically self-polar convex body such that $\vol(X) = 3$. Then $X$ is linearly symplectomorphic to $P$.
	\end{theorem}

	First, we prove the following lemma.

	\begin{lemma}\label{lem:hexagon_min}
		Let $X \subset \mathbb{R}^2$ be a symplectically self-polar hexagon. Then $X$ is linearly symplectomorphic to $P$.
	\end{lemma}

	\begin{proof}
		Let $v$ be some vertex of $X$. By Lemma~\ref{lem:omega:ineq} there exists a point $w \in X$ such that $\omega(v,w) = 1$. Since $X$ is a convex polygone, the linear function $\omega(v,\cdot)$ attains its maximum on some vertex of $X$. Thus, we can assume that $w$ is a vertex of $X$. Up to an appropriate linear symplectomorphism we can assume that $v = (1,0) \in \mathbb{R}^2$ and $w = (0,1) \in \mathbb{R}^2$. Since $X$ is a centrally symmetric hexagon its vertices are $\pm v, \pm w, \pm u$ for some $u$. Without loss of generality we can assume that $u = (x_u,y_u) \in \mathbb{R}^2$ for $x_u,y_u > 0$. Since $X$ is symplectically self-polar we have $\omega(v,u) \leq 1$ and $\omega(u,w) \leq 1$, hence $x_u,y_u\leq 1$. Therefore $X \subseteq P$ and $P^\omega \subseteq X^\omega$. Thus, $X = P$, since $X = X^\omega$.
	\end{proof}

	\begin{proof}[Proof of Theorem~\ref{thm:2_dim_unique_min_vol}]
		The main ingredient of the proof of Theorem~\ref{theorem:akopyan_karasev} (see~\cite{akopyan2017estimating} or~\cite[Appendix]{berezovik2022symplectic}) is the following inequalities
		\[
				\frac{c_{EHZ}(X)}{c_{J}(X)} \geq \frac{m(X)}{2} \geq 2 + \frac{1}{n}		
		\]
		for a centrally symmetric convex body $X \subset \mathbb{R}^{2n}$ with a smooth boundary. Here $m(X)$ is the infimum of lengths of the centrally symmetric closed curves $\gamma \subseteq \partial X$ with respect to the norm $\|\cdot\|_X$. The first inequality was proven by Akopyan and Karasev in~\cite{akopyan2017estimating}. The last inequality is essentially Theorem 13F in the book~\cite{schaffer1976geometry}.

		For a centrally symmetric convex body $X$ with a smooth boundary of dimension $2$ ($n = 1$) these inequalities reduce to
		\begin{equation}\label{ineq:2_dim_min_vol}
				\frac{\vol(X)}{c_{J}(X)}  \geq \frac{l_{X}(\partial X)}{2} \geq 3,
		\end{equation}
		since the Ekeland--Hofer--Zehnder capacity coincides with the volume of a convex body, and $l_{X}(\partial X) \coloneqq  m(X)$ is the length of the boundary $\partial X$ with respect to the norm $\|\cdot\|_X$.

		Note that $\vol(X),c_{J}(X)$, and $l_{X}(\partial X)$ are continuous with respect to the Hausdorff distance on centrally symmetric convex bodies in $\mathbb{R}^2$. Indeed, $\vol(X)$ and $c_{J}(X)$ are continuous since they are 2-homogeneous with respect to scaling and monotone with respect to inclusion (the proof is similar to the proof of Theorem 1.8.20 in~\cite{Schneider_2013}). The function $l_{X}(\partial X)$ is also continuous with respect to the Hausdorff distance since it continuous with respect to the Banach--Mazur distance~\cite[Theorem 13A]{schaffer1976geometry}. Therefore, inequality~\eqref{ineq:2_dim_min_vol} holds for every centrally symmetric convex body $X \subset \mathbb{R}^2$.
		
		Now let $X \subset \mathbb{R}^2$ be a symplectically self-polar convex body such that $\vol(X) = 3$. Inequality~\eqref{ineq:2_dim_min_vol} implies that $l_{X}(\partial X) = 6$. Theorem~4K in~\cite{schaffer1976geometry} implies that $X$ is a linear image of a regular hexagon. By Lemma~\ref{lem:hexagon_min}, $X$ is linearly symplectomorphic to $P$.
	\end{proof}

	The symplectic $P$-suspension defines a map from the set of $2n$-dimensional symplectically self-polar convex bodies to the set of $(2n+2)$-dimensional symplectically self-polar convex bodies. By Theorem~\ref{thm:volume}, $\vol(P \ltimes_{\omega} X) = C_n \cdot \vol(X)$ for every symplectically self-polar convex body $X \subset \mathbb{R}^{2n}$ (the constant $C_n$ is explicitly given in Theorem~\ref{thm:volume} and depends only on $n$). Therefore, in light of Theorem~\ref{thm:2_dim_unique_min_vol}, the polytope  $P^{\ltimes 2}$  is the only symplectically self-polar convex body (up to linear symplectomorphism) of minimal volume in $\mathbb{R}^4$ that can be obtained using $P$-suspension.
	
	\begin{question}
		Does $P^{\ltimes 2}$ have the minimal volume not only among all symplectic $P$-suspensions of $2$-dimensional symplectically self-polar convex bodies, but among all $4$-dimensional symplectically self-polar convex bodies?
	\end{question}

	Suppose the answer is ``yes''.
	Would $P^{\ltimes 3}$ have the minimal volume among all six-dimensional symplectically self-polar convex bodies? And would $P^{\ltimes n}$, in general, have the minimal volume among all $2n$-dimensional symplectically self-polar convex bodies? Let us contemplate this.

	We compare this sequence with the sequence $Q^n \oplus_2 C^n$, where $Q^n = [-1,1]^n$, $C^n = (Q^n)^\circ$ is its polar cross-polytope, and $\oplus_2$ is the Lagrangian $\ell_2$-sum. These convex bodies are symplectically self-polar with
	\[
		\vol(Q^n \oplus_2 C^n) = \frac{4^n}{n!} \frac{(\Gamma(\frac{n}{2}+1))^2}{\Gamma(n+1)} = \frac{2^n}{n!}\cdot O(n^{1/2}).
	\]
	For details see~\cite{berezovik2022symplectic}. If Mahler's conjecture is true, their volumes are minimal up to a sub-exponential factor.

	This is a special case of the following construction. Let $K \subset \mathbb{R}^n$ be a centrally symmetric convex body, then the Lagrangian $\ell_2$-sum $K \oplus_{2} K^{\circ} \subset \mathbb{R}^{2n}$ is a symplectically self-polar convex body and
	\[
		\vol(K \oplus_{2} K^{\circ}) = \frac{(\Gamma(\frac{n}{2}+1))^2}{\Gamma(n+1)} \cdot \vol K \cdot \vol K^\circ.
	\]
	Note that the right-hand side of this equality coincides (up to a constant) with the expression appearing in Mahler's conjecture. Therefore, if Mahler's conjecture is true, the body $Q^n \oplus_2 C^n$ has the minimal volume among all bodies that can be obtained using construction $K \oplus_{2} K^{\circ}$. 

 	\begin{lemma}\label{lem:compare}
 		For any positive integer $n$ the following inequality holds:
 		\[
 			\frac{\vol(Q^n \oplus_2 C^n)}{\vol(P^{\ltimes n})} \geq \frac{\pi}{3}.
 		\]
 		For $n \neq 1$ this inequality is strict. Asymptotically,
 		\[
 			\frac{\vol(Q^n \oplus_2 C^n)}{\vol(P^{\ltimes n})} \sim \frac{\Gamma\left(\frac{3}{4}\right)}{\sqrt{2}} n^{1/4},
 		\]
 		as $n \to \infty$.
 	\end{lemma}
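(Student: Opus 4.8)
The plan is to reduce everything to the two explicit volume formulas. Substituting the stated $\vol(Q^n\oplus_2 C^n)=\frac{4^n}{n!}\frac{\Gamma(\frac n2+1)^2}{\Gamma(n+1)}$ together with the formula for $\vol P^{\ltimes n}$ from Theorem~\ref{thm:prop_P}, the $n!$ and one factor $2^n$ cancel, leaving
\[
\frac{\vol(Q^n\oplus_2 C^n)}{\vol(P^{\ltimes n})} = 2^n\,\frac{\Gamma(\frac n2+1)^2}{\Gamma(n+1)}\cdot\frac{\Gamma(n+\frac12)}{\Gamma(n+\frac34)}\cdot\frac{\Gamma(\frac34)}{\Gamma(\frac12)}.
\]
The first step I would carry out is to eliminate $\Gamma(n+1)$ with the Legendre duplication formula $\Gamma(\frac{n+1}2)\Gamma(\frac n2+1)=2^{-n}\sqrt{\pi}\,\Gamma(n+1)$; since $\Gamma(\frac12)=\sqrt{\pi}$, every power of $2$ and of $\sqrt{\pi}$ disappears and the ratio collapses to the compact form
\[
R(n):=\frac{\vol(Q^n\oplus_2 C^n)}{\vol(P^{\ltimes n})} = \Gamma\!\left(\tfrac34\right)\cdot\frac{\Gamma(\frac n2+1)}{\Gamma(\frac{n+1}2)}\cdot\frac{\Gamma(n+\frac12)}{\Gamma(n+\frac34)}.
\]
This identity drives all three assertions of the lemma.

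For the asymptotics I would apply the standard ratio expansion $\Gamma(x+\alpha)/\Gamma(x+\beta)\sim x^{\alpha-\beta}$: the first quotient behaves like $(n/2)^{1/2}$ and the second like $n^{-1/4}$, so $R(n)\sim\Gamma(\frac34)\cdot\frac{1}{\sqrt2}\,n^{1/2}\cdot n^{-1/4}=\frac{\Gamma(3/4)}{\sqrt2}\,n^{1/4}$, exactly the claimed equivalent. Evaluating $R(1)$ directly with $\Gamma(\frac32)=\frac{\sqrt{\pi}}2$ and $\Gamma(\frac74)=\frac34\Gamma(\frac34)$ gives $R(1)=\frac{\pi}3$, so the inequality is an equality at $n=1$.

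It remains to prove $R(n)\ge\frac{\pi}3$ with strictness for $n\ge2$, and the cleanest route is to show that $R$ is strictly increasing in the integer $n$. Computing $R(n+1)/R(n)$, the half-integer quotients telescope into $\frac{\Gamma(\frac{n+3}2)\Gamma(\frac{n+1}2)}{\Gamma(\frac{n+2}2)^2}$, while the quarter-integer quotients collapse, via $\Gamma(x+1)=x\Gamma(x)$, into $\frac{n+1/2}{n+3/4}$. Setting $a=\frac{n+1}2$ and using $\frac{\Gamma(a+1)\Gamma(a)}{\Gamma(a+\frac12)^2}=a\left(\frac{\Gamma(a)}{\Gamma(a+\frac12)}\right)^2$, the inequality $R(n+1)>R(n)$ becomes equivalent to
\[
\left(\frac{\Gamma(a+\frac12)}{\Gamma(a)}\right)^2 < \frac{a(8a-2)}{8a-1},\qquad a=\tfrac{n+1}2\ge 1.
\]

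I expect this last Gamma-ratio inequality to be the main obstacle. The crude log-convexity bound $\left(\frac{\Gamma(a+\frac12)}{\Gamma(a)}\right)^2<a$ is not strong enough, because the right-hand side above equals only $a-\frac18-\frac{1}{8(8a-1)}$. The resolution is to invoke a sharp second-order bound on this ratio — equivalently, on Wallis' quotient $\binom{2m}{m}/4^m$, to which it specializes at half-integer and integer $a$ — of the form $\left(\frac{\Gamma(a+\frac12)}{\Gamma(a)}\right)^2\le a-\frac14+\frac{1}{16a}$ for $a\ge1$. Since the comparison $a-\frac14+\frac{1}{16a}\le a-\frac18-\frac{1}{8(8a-1)}$ reduces for $a\ge1$ to the elementary estimate $\frac{1}{16a}+\frac{1}{8(8a-1)}\le\frac1{16}+\frac1{56}<\frac18$, which holds with comfortable slack, this finishes the monotonicity. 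Combined with $R(1)=\frac{\pi}3$, strict monotonicity yields $R(n)>\frac{\pi}3$ for every integer $n\ge2$, completing the proof.
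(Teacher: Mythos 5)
Your proposal is correct in structure and reaches all three claims, but it takes a genuinely different route from the paper, and it leans on one unproven external ingredient. The paper simply sets $a_n=\vol(Q^n\oplus_2 C^n)/\vol(P^{\ltimes n})$ in raw Gamma form and computes the \emph{two-step} ratio $a_{n+2}/a_n$: stepping $n\mapsto n+2$ makes every Gamma factor (including $\Gamma(\frac n2+1)^2$) advance by an integer, so the ratio collapses via $\Gamma(x+1)=x\Gamma(x)$ to the rational function $\frac{16n^3+64n^2+76n+24}{16n^3+56n^2+61n+21}>1$; combined with $a_1=\pi/3$ and $a_2=8/7>a_1$, monotonicity along each parity class gives $a_n>\pi/3$ for $n>1$, and Stirling gives the asymptotics. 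You instead prove \emph{one-step} monotonicity of the simplified form $R(n)=\Gamma(\frac34)\frac{\Gamma(\frac n2+1)}{\Gamma(\frac{n+1}2)}\frac{\Gamma(n+\frac12)}{\Gamma(n+\frac34)}$ (your duplication-formula reduction, the value $R(1)=\pi/3$, and the asymptotic computation are all correct), which is a strictly stronger statement — but the price is that $R(n+1)/R(n)>1$ reduces to a genuine Wallis-quotient estimate, and the bound $\bigl(\Gamma(a+\frac12)/\Gamma(a)\bigr)^2\le a-\frac14+\frac{1}{16a}$ is invoked rather than proved. That bound is true (it follows, e.g., from Watson's result that $\theta(x)=\bigl(\Gamma(x+1)/\Gamma(x+\frac12)\bigr)^2-x$ decreases from $\frac1\pi$ to $\frac14$, which even gives the cleaner bound $a-1+\frac\pi4$ for $a\ge1$, sufficient for your comparison), so your argument can be completed; but as written this is the one real gap, and the paper's two-step trick avoids any transcendental inequality entirely. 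If you keep your route, either cite a precise reference for the Gamma-ratio bound or replace it by the elementary observation that comparing $R(n+2)/R(n)$ instead makes all Gamma functions cancel.
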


	The reader may find the proof of this lemma as well as Lemma~\ref{lem:cehz} below in Appendix~\ref{section:appendix}. 

 	Let us summarize several facts about the polytopes $P^{\ltimes n}$:
 	\begin{itemize}
 		\item They have the minimal Ekeland--Hofer--Zehnder capacity $c_{EHZ}(P^{\ltimes n}) = 2 + 1/n$ among all symplectically self-polar convex bodies.
 		\item They have the minimal affine cylindrical capacity $c_{ZA}(P^{\ltimes n}) = 3$ among all symplectically self-polar convex bodies.
 		\item The polytope $P$ has the minimal volume among all symplectically self-polar bodies in dimension $2$. The results of Section~\ref{section:numerical} with numerical experiments suggest that the polytope $P^{\ltimes 2}$ is likely to have the minimal volume in dimension $4$. With more caution, it seems that the polytope $P^{\ltimes 3}$ has the minimal volume in  dimension 6.
 		\item Everything written in the previous bullet point also applies to the number of vertices of these polytopes.
 	\end{itemize}

 	Taking into account all these arguments, let us formulate:
 	\begin{conjecture}\label{conj:1}
 		Let $X \subset \mathbb{R}^{2n}$ be a symplectically self-polar convex body. Then
 		\[
 			\vol(X) \geq \vol(P^{\ltimes n}).
 		\]
 		If $X$ is a polytope, then the number of vertices of $X$ is no less than the number of vertices of $P^{\ltimes n}$.
 	\end{conjecture}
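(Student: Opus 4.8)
The plan is to prove Conjecture~\ref{conj:1} by induction on $n$, with the single-step engine being a \emph{minimality of the suspension} statement. By Theorem~\ref{thm:volume} one has $\vol(P \ltimes_\omega X) = \frac{4n+3}{(n+1)(2n+1)}\vol X$, so that $\vol P^{\ltimes n}$ is the telescoping product $3 \cdot \prod_{k=1}^{n-1}\frac{4k+3}{(k+1)(2k+1)}$ anchored at the two-dimensional minimizer $\vol P = 3$ from \cite[Corollary 5.2]{berezovik2022symplectic}. Writing $m_{2n} = \inf\{\vol X : X \subset \mathbb{R}^{2n},\ X = X^\omega\}$, it therefore suffices to establish
\[
    m_{2n+2} = \frac{4n+3}{(n+1)(2n+1)}\, m_{2n},
\]
together with the fact that the infimum is attained exactly on suspensions of $2n$-dimensional minimizers. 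Granting this, an immediate induction gives $m_{2n} = \vol P^{\ltimes n}$, which is the volume half of the conjecture.

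First I would settle existence. The self-polarity constraint $X = X^\omega$ is scale-rigid, since $c_{J}(X) = 1$ fixes the symplectic ``size'' of $X$, and together with central symmetry this confines the inradius and circumradius to a fixed range; hence the family of symplectically self-polar bodies in a given dimension is compact in the Hausdorff metric and a volume minimizer $Y^* \subset \mathbb{R}^{2n+2}$ exists. The decisive step is then a variational rigidity analysis of $Y^*$: one wants to show that $Y^*$ is a polytope carrying a \emph{suspension structure}, i.e.\ that after a linear symplectomorphism $Y^* = P \ltimes_\omega X^*$ for some symplectically self-polar $X^* \subset \mathbb{R}^{2n}$. The mechanism is local surgery preserving self-polarity while strictly lowering volume: a smooth or otherwise non-extremal piece of $\partial Y^*$ can be flattened, and because $(Y^*)^\omega = Y^*$ the dual modification is forced, so one trades a boundary cap against its symplectic polar and reads off the sign of the volume change. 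Iterating isolates a finite vertex set, and Theorem~\ref{th:represent} together with Corollary~\ref{cor:forselfpol} then let me recognize the resulting minimizer as a genuine suspension, at which point the induction hypothesis applies to $X^*$.

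For the vertex-count half I would run a parallel induction with the same suspension decomposition. Because $X = X^\omega$ forces $X^\circ = JX$ for the relevant complex structure $J$, every symplectically self-polar polytope satisfies $|V(X)| = |\mathcal{F}(X)|$, where $\mathcal{F}$ denotes the facet set, which rigidifies its combinatorics. Granting that a vertex-minimal self-polar polytope in dimension $2n+2$ again has suspension structure, Corollary~\ref{cor:vertices} yields the recursion $f(n+1) \ge 2 f(n) + 4$ for the minimal vertex number $f$, anchored at $f(1) = 6$ (the hexagon $P$); solving gives $f(n) \ge 10(2^{n-1}-1)+6 = |V(P^{\ltimes n})|$, as claimed.

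The hard part is exactly the rigidity of the minimizer in both halves: proving that a volume- (respectively vertex-) minimal symplectically self-polar body must be a suspension. This is genuinely beyond present methods, and necessarily so, since by \cite{berezovik2022symplectic} the weaker bound $\vol X \ge 2^n/n!$ is already equivalent to the symmetric Mahler conjecture, while Conjecture~\ref{conj:1} strengthens it by the factor $\frac{\Gamma(n+3/4)\,\Gamma(1/2)}{\Gamma(n+1/2)\,\Gamma(3/4)} = O(n^{1/4})$. A realistic intermediate target is thus to prove suspension-minimality on restricted classes---for instance among bodies already admitting a symplectic $\mathbb{R}^2 \oplus \mathbb{R}^{2n}$ splitting, or among simplicial self-polar polytopes---where the local surgery and the identity $(Y)^\omega = Y$ can be made fully explicit; even a conditional proof modulo Mahler that captures the new $O(n^{1/4})$ factor would already be of interest.
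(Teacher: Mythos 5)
The statement you are asked about is Conjecture~\ref{conj:1}: the paper itself offers no proof of it, only supporting evidence (the extremality of $c_{EHZ}$ and $c_{ZA}$ on $P^{\ltimes n}$, and the numerical experiments of Section~\ref{section:numerical}), so there is no paper argument to compare yours against. Your proposal is candid that it is a programme rather than a proof, and indeed its central step --- that a volume-minimal (or vertex-minimal) symplectically self-polar body in $\mathbb{R}^{2n+2}$ must, after a linear symplectomorphism, be a suspension $P \ltimes_{\omega} X^*$ --- is exactly the content of the conjecture and is supported by nothing in the paper or in your sketch. The ``local surgery'' mechanism is not available in any usable form: the constraint $Y = Y^{\omega}$ is global, so any local flattening of $\partial Y$ forces a nonlocal compensating change to keep $Y^{\omega} = Y$, and there is no sign computation showing the volume decreases. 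The paper even flags as an open Question whether self-polar polytopes are Hausdorff-dense among self-polar bodies, so already the reduction from bodies to polytopes is unproven; and even if the minimizer were a polytope, nothing forces the distinguished symplectic $\mathbb{R}^2 \oplus \mathbb{R}^{2n}$ splitting needed to recognize it as a suspension and run your induction. The same gap propagates into the vertex-count half, where the recursion $f(n+1) \geq 2f(n) + 4$ is only valid conditionally on the suspension structure of a vertex-minimal polytope.

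Beyond the missing core, one of your preparatory steps is concretely false. You claim that self-polarity ``confines the inradius and circumradius to a fixed range,'' making the family compact in the Hausdorff metric. But if $T$ is any linear symplectomorphism then $(TX)^{\omega} = T(X^{\omega})$, so the class of symplectically self-polar bodies is invariant under the noncompact group $\mathrm{Sp}(2n)$: applying $\mathrm{diag}(\lambda, \lambda^{-1})$ to the hexagon $P$ already produces self-polar bodies of volume $3$ and unbounded circumradius. The invariant $c_J(X) = 1$ cannot fix a metric scale, since $c_J$ is itself a symplectic invariant. Existence of a minimizer therefore requires working modulo $\mathrm{Sp}(2n)$ (e.g.\ a John-position normalization combined with Blaschke selection and the closedness of the self-polarity condition), which is repairable but must be argued, not asserted. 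The correct and verifiable parts of your write-up --- the telescoping volume formula $\vol P^{\ltimes n} = 3\prod_{k=1}^{n-1}\frac{4k+3}{(k+1)(2k+1)}$ from Theorem~\ref{thm:volume}, the anchor $\vol P = 3$, the identity $X^{\circ} = -JX$ giving $|V(X)| = |\mathcal{F}(X)|$ for self-polar polytopes, and the vertex recursion for genuine suspensions via Corollary~\ref{cor:vertices} --- reproduce what the paper already establishes about $P^{\ltimes n}$, and your closing suggestion to prove suspension-minimality on restricted classes is a reasonable research direction; but as a proof of Conjecture~\ref{conj:1} the attempt has an unfilled gap precisely where the conjecture lives.
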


 		The next conjecture is a stronger version of the previous one and is partially motivated by Theorem~\ref{thm:2_dim_unique_min_vol} and Lemma~\ref{lem:hexagon_min}.
 	\begin{conjecture}
 		Let $X \subset \mathbb{R}^{2n}$ be a symplectically self-polar convex body. $X$ is linearly symplectomorphic to $P^{\ltimes n}$ if one of the following items holds.
	\begin{itemize}
 		\item $X$ has the minimal volume among all $2n$-dimensional symplectically self-polar convex bodies. 
 		\item $X$ is a polytope and has the minimal number of vertices among all $2n$-dimensional symplectically self-polar convex polytopes of given dimension.
 	\end{itemize}
 	\end{conjecture}

 	How does Conjecture~\ref{conj:1} agree with Viterbo's conjecture for centrally symmetric convex bodies? Recall that the latter states that for every centrally symmetric convex body $X \subseteq \mathbb{R}^{2n}$, 
 	\[
 			\vol(X) \geq \frac{c_{EHZ}(X)^n}{n!}.
 	\]

 	Substituting $P^{\ltimes n}$ into this inequality yields

 	\begin{lemma}\label{lem:cehz}
 			The ratio
 			\[
 				\frac{n! \vol(P^{\ltimes n})}{(c_{EHZ}(P^{\ltimes n}))^n}
 			\]
 			equals $1$ for $n = 1$ and strictly increases with $n$.

 			The asymptotic formula for $n \to \infty$ has the following form:
 			\[
 				\frac{n! \vol(P^{\ltimes n})}{(c_{EHZ}(P^{\ltimes n}))^n} \sim \frac{\Gamma(1/2)}{e^{1/2}\Gamma(3/4)} n^{1/4}.	
 			\]
 	\end{lemma}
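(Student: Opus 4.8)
The plan is to collapse everything into a single closed form and then analyze it. Substituting the volume formula of Theorem~\ref{thm:prop_P} and the capacity value $c_{EHZ}(P^{\ltimes n}) = (2n+1)/n$ of Theorem~\ref{main:cehz}, the factor $n!$ cancels, and collecting the powers of $2n$ and $2n+1$ gives
\[
	R_n := \frac{n!\,\vol(P^{\ltimes n})}{\left(c_{EHZ}(P^{\ltimes n})\right)^n} = \left(1 + \frac{1}{2n}\right)^{-n}\frac{\Gamma\!\left(n+\tfrac34\right)}{\Gamma\!\left(n+\tfrac12\right)}\cdot\frac{\Gamma\!\left(\tfrac12\right)}{\Gamma\!\left(\tfrac34\right)}.
\]
The value $R_1 = 1$ then follows from $\Gamma(x+1) = x\Gamma(x)$, since $\Gamma(7/4) = \tfrac34\Gamma(3/4)$ and $\Gamma(3/2) = \tfrac12\Gamma(1/2)$ make the factors collapse to $\tfrac23\cdot\tfrac{3/4}{1/2} = 1$.

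For the monotonicity I would study the ratio $R_{n+1}/R_n$. Using $\Gamma(x+1) = x\Gamma(x)$ once more, the gamma quotient contributes exactly $\frac{n+3/4}{n+1/2} = \frac{4n+3}{4n+2}$, so that
\[
	\frac{R_{n+1}}{R_n} = \left(1+\frac{1}{2n}\right)^{n}\left(1+\frac{1}{2n+2}\right)^{-(n+1)}\cdot\frac{4n+3}{4n+2}.
\]
Taking logarithms and setting $\psi(x) = x\ln(1+1/x)$, the first two factors become $\tfrac12\psi(2n) - \tfrac12\psi(2n+2)$, and hence
\[
	\ln\frac{R_{n+1}}{R_n} = \ln\!\left(1+\frac{1}{4n+2}\right) - \tfrac12\bigl(\psi(2n+2)-\psi(2n)\bigr).
\]
A direct computation gives $\psi''(x) = -\frac{1}{x(x+1)^2} < 0$, so $\psi$ is concave and $\psi'$ is strictly decreasing; therefore $\psi(2n+2)-\psi(2n) < 2\psi'(2n)$. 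Since $\psi'(x) = \ln(1+1/x) - \frac{1}{x+1}$, it suffices to prove the pointwise inequality
\[
	\ln\!\left(1+\frac{1}{4n+2}\right) + \frac{1}{2n+1} > \ln\!\left(1+\frac{1}{2n}\right).
\]
Merging the two logarithms gives $\ln\bigl(1+\tfrac{n+1}{n(4n+3)}\bigr)$ on the right, and applying $\ln(1+x) < x$ reduces the claim to the elementary inequality $n(4n+3) > (n+1)(2n+1)$, i.e. $2n^2 > 1$, which holds for every $n \geq 1$. This establishes $R_{n+1} > R_n$ with strictness throughout.

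For the asymptotics I would invoke the standard Stirling consequence $\Gamma(n+a)/\Gamma(n+b) = n^{a-b}\bigl(1+O(1/n)\bigr)$ with $a = \tfrac34$, $b = \tfrac12$, giving $\Gamma(n+3/4)/\Gamma(n+1/2) \sim n^{1/4}$, together with $(1+1/(2n))^{-n} \to e^{-1/2}$; multiplying by the constant $\Gamma(1/2)/\Gamma(3/4)$ yields $R_n \sim \frac{\Gamma(1/2)}{e^{1/2}\Gamma(3/4)}\,n^{1/4}$ as claimed. The only genuinely delicate step is the monotonicity, and even there the concavity of $\psi$ converts the transcendental inequality into a quadratic one; the sole point requiring care is that the bound $\psi(2n+2)-\psi(2n) < 2\psi'(2n)$ overestimates the increment, so the resulting polynomial inequality is a \emph{sufficient} condition, which is exactly why $2n^2>1$ closes the argument.
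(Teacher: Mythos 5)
Your proposal is correct: the closed form for $R_n$, the value $R_1=1$, the consecutive-ratio $\frac{R_{n+1}}{R_n}=\left(1+\frac{1}{2n}\right)^{n}\left(1+\frac{1}{2n+2}\right)^{-(n+1)}\frac{4n+3}{4n+2}$, and the Stirling asymptotics all match the paper's setup exactly, and your monotonicity argument is sound. The one place where you diverge from the paper is in how the inequality $R_{n+1}/R_n>1$ is closed. The paper groups the two power factors into $\left(\frac{2n^2+3n+1}{2n^2+3n}\right)^n$ and applies Bernoulli's inequality $\left(1+\frac{1}{2n^2+3n}\right)^n\geq 1+\frac{1}{2n+3}$, which reduces the claim in one line to comparing two explicit cubics. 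You instead take logarithms, introduce $\psi(x)=x\ln(1+1/x)$, use $\psi''<0$ together with the mean value theorem to bound $\psi(2n+2)-\psi(2n)$ by $2\psi'(2n)$, and then finish with $\ln(1+x)<x$, landing on $2n^2>1$. Both are valid; the paper's route is shorter and entirely algebraic, while yours isolates more transparently why the inequality holds (the concavity of $\psi$ does all the work) and ends on a cleaner final inequality. Your own caveat is handled correctly: since you bound the increment of $\psi$ from above by $2\psi'(2n)$, showing $\ln\left(1+\frac{1}{4n+2}\right)\geq\psi'(2n)$ is indeed sufficient, and the strictness of $\ln(1+x)<x$ gives strict monotonicity.
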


 	Thus, Conjecture~\ref{conj:1} does not contradict Viterbo's conjecture but no longer directly follows from it.
	
 	\begin{remark}
 		The upper bounds $\vol X \leq \pi^n/n!$ for the volume and $c_{EHZ}(X) \leq \pi$ for the capacity of a symplectically self-polar convex body $X \subset \mathbb{R}^{2n}$ have been obtained in~\cite{berezovik2022symplectic}. The first inequality straightforwardly follows from the Blaschke--Santal\'o inequality~\cite{Santalo1949}. Moreover, if $\vol X = \pi^n/n!$, then $X$ is linearly symplectomorphic to the unit Euclidean ball (this is a consequence of the Blaschke--Santal\'o inequality when equality is achieved). The second inequality is not so straightforward and the proof is based on Clark's variational method~\cite{clarke} and the two-dimensional Blaschke--Santal\'o inequality. It is known that the equality $c_{EHZ}(X) = \pi$ is achieved for the unit Euclidean ball. In dimension 2 it is known that the equality is achieved only for the images of the unit Euclidean ball under the action of linear symplectomorphisms since $c_{EHZ}(X) = \vol(X)$. However, in higher dimensions it is unknown whether the equality is achieved for anything other than the unit Euclidean ball (up to linear symplectomorphisms).
 	\end{remark}

	\section{Numerical experiments}\label{section:numerical}

	This section explains the theoretical basis of numerical experiments, partially explains their technical implementation, and demonstrates results. A reader interested in particular programming implementation may find the code on  GitHub\footnote{\href{https://github.com/mberezovik/Symplectically_self_polar_polytopes/}{Link} to the GitHub repository.}.
	
	To continue the discussion of algorithms let us make several simple statements.
	\begin{lemma}\label{lem:pol_check}
		Let $K \subset \mathbb{R}^{2n}$ be a centrally symmetric convex polytope. The inclusion $K \subseteq K^{\omega}$ holds if and only if for every two vertices $v,w$ of the polytope $K$, $\omega(v,w) \leq 1$.
	\end{lemma}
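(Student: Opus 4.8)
The plan is to reduce the containment $K \subseteq K^{\omega}$ to a statement about the bilinear form $\omega$ evaluated on pairs of points of $K$, and then to exploit bilinearity to pass from arbitrary points down to vertices. First I would simply unwind the definition of the symplectic polar: by definition $y \in K^{\omega}$ precisely when $\omega(x,y) \le 1$ for every $x \in K$, so the inclusion $K \subseteq K^{\omega}$ is equivalent to the single condition that
\[
\omega(x,y) \le 1 \qquad \text{for all } x,y \in K.
\]
With this reformulation the ``only if'' direction is immediate, since every vertex of $K$ is in particular a point of $K$.

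For the converse, I would assume $\omega(v,w) \le 1$ for all vertices $v,w$ of $K$ and deduce the displayed condition for all points. Let $V = \{v_1,\ldots,v_N\}$ be the vertex set, so that $K = \conv V$. Given $x,y \in K$, I write them as convex combinations $x = \sum_i \lambda_i v_i$ and $y = \sum_j \mu_j v_j$ with $\lambda_i,\mu_j \ge 0$ and $\sum_i \lambda_i = \sum_j \mu_j = 1$. Bilinearity of $\omega$ then gives
\[
\omega(x,y) = \sum_{i,j} \lambda_i \mu_j\, \omega(v_i,v_j) \le \sum_{i,j} \lambda_i \mu_j = \Big(\sum_i \lambda_i\Big)\Big(\sum_j \mu_j\Big) = 1,
\]
which is exactly the condition obtained in the first step. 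Note that the hypothesis automatically covers the degenerate pairs $v = w$ and $v = -w$, where $\omega(v,w) = 0 \le 1$ by antisymmetry, so no separate bookkeeping is required.

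I do not expect any serious obstacle here; this is essentially the standard fact that a bilinear form on a product of polytopes attains its extreme values on pairs of vertices, specialized to the bound $1$. The only point worth stating carefully is that the reduction to vertices rests entirely on $\omega$ being bilinear, which is what makes $\omega(x,\cdot)$ linear for fixed $x$ and hence controlled by its values at vertices. I would also remark that the central symmetry of $K$ is not in fact used anywhere in the argument; it is presumably included in the hypothesis only because the polytopes of interest in this paper are centrally symmetric.
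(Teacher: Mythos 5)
Your proof is correct and follows essentially the same route as the paper: the paper reduces to vertices by noting that $\omega(x,y)$, being linear in each variable separately, attains its maximum over $K$ at a vertex (applied first in $x$, then in $y$), which is just the maximum principle you make explicit by expanding $x$ and $y$ as convex combinations of vertices and using bilinearity. Your side remarks (the degenerate pairs and the fact that central symmetry is not used) are accurate.
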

	\begin{proof}
		In one direction it follows from the definition of $K^{\omega}$. In the opposite direction, let $x,y \in K$. If we consider $\omega(x,y)$ as a function of the first variable ranging over $K$, then it attains a maximum on some vertex $v$. So $\omega(x,y) \leq \omega(v,y)$. Similarly, consider this function as a function of the second variable. Then there is a vertex $w$ such that $\omega(v,y) \leq \omega(v,w) \leq 1$.	
	\end{proof}

	\begin{lemma}\label{lem:increase}
		Let $K \subset \mathbb{R}^{2n}$ be a centrally symmetric convex polytope such that $K \subseteq K^{\omega}$. Let $S$ be a centrally symmetric subset of $V(K^\omega)$ such that $\omega(v,w) \leq 1$ for every $v,w \in S$. Then the polytope $	M = \conv (K \cup S) $
     	satisfies $M \subseteq M^{\omega}$.
	\end{lemma}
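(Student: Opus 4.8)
The plan is to reduce everything to the vertex criterion of Lemma~\ref{lem:pol_check}, which says that a centrally symmetric convex polytope $M$ satisfies $M \subseteq M^\omega$ precisely when $\omega(v,w) \leq 1$ for every pair of its vertices. Since $M = \conv(K \cup S)$ and $K = \conv V(K)$, we have $M = \conv(V(K) \cup S)$, so every vertex of $M$ lies in the finite set $V(K) \cup S$ (recall $S \subseteq V(K^\omega)$ and $K^\omega$ is a polytope). Hence it suffices to verify the bound $\omega(v,w) \leq 1$ for all pairs drawn from $V(K) \cup S$.

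First I would split into three cases according to where $v$ and $w$ live. If both $v, w \in V(K)$, then the hypothesis $K \subseteq K^\omega$ together with Lemma~\ref{lem:pol_check} gives $\omega(v,w) \leq 1$ directly. If both $v, w \in S$, the bound is exactly the assumption placed on $S$. The only genuinely new case is the mixed one, $v \in V(K)$ and $w \in S$: here I would use that $S \subseteq V(K^\omega) \subseteq K^\omega$, so $w \in K^\omega$, and then the definition $K^\omega = \{y : \forall x \in K,\ \omega(x,y) \leq 1\}$ applied to $x = v \in K$ yields $\omega(v,w) \leq 1$.

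Finally I would address the antisymmetry of $\omega$: the vertex criterion must hold for every ordered pair, whereas the three cases above only bound $\omega$ in one direction. This is where central symmetry of $K$ and $S$ enters. Since $-v$ and $-w$ are again vertices of $K$ (respectively points of $S$), replacing $v$ by $-v$ or $w$ by $-w$ upgrades each inequality $\omega(v,w) \leq 1$ to $|\omega(v,w)| \leq 1$, which covers both orderings simultaneously and so confirms the hypothesis of Lemma~\ref{lem:pol_check} for $M$.

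I do not expect a serious obstacle here; the statement is essentially a bookkeeping exercise once Lemma~\ref{lem:pol_check} is in place. The one point that requires care is the mixed case, where one must remember that the extra points $S$ were drawn from $V(K^\omega)$ rather than from $V(K)$, so their pairing with the vertices of $K$ is controlled by the polar relation $S \subseteq K^\omega$ rather than by the hypothesis on $K$ alone.
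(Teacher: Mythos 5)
Your proof is correct, but it follows a different route from the paper's. You reduce everything to the vertex criterion of Lemma~\ref{lem:pol_check}: since $M=\conv(V(K)\cup S)$, every vertex of $M$ lies in $V(K)\cup S$, and you check the bound $\omega(v,w)\le 1$ in the three cases (both vertices in $V(K)$, both in $S$, mixed), using the definition of $K^\omega$ for the mixed case and central symmetry of $K$ and $S$ to upgrade each one-sided bound to $|\omega(v,w)|\le 1$ so that both orderings are covered. The paper instead invokes the one-pair-at-a-time result from the proof of \cite[Lemma~6.6]{berezovik2022symplectic} --- that $R_p=\conv(\{\pm p\}\cup K)$ satisfies $R_p\subseteq R_p^{\omega}$ for $p\in K^{\omega}$ --- and then verifies that the remaining points of $S$ stay inside $R_p^{\omega}$ (writing $z\in R_p$ as $z=tx\pm(1-t)p$ and bounding $|\omega(q,z)|$ by convexity), so that the points of $S$ can be adjoined iteratively. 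Your argument is more self-contained, needing only Lemma~\ref{lem:pol_check} rather than an external reference, and it avoids the slightly elliptical iteration in the paper; the paper's version, on the other hand, makes explicit that the enlargement can be performed one antipodal pair at a time, which is the form in which the statement is actually used by the generating algorithm. Both are complete; the one point you were right to flag, and did handle correctly, is the mixed case together with the antisymmetry of $\omega$.
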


	\begin{proof}
		For a point $p \in K^{\omega} \setminus K$ consider $R_p = \conv \{\pm p\} \cup K$. Then $R_p \subseteq R_p^{\omega}$. Indeed, from the definition of $R_p$ it follows that every point $z \in R_p$ can be written as $z = tx \pm (1-t)p$, where $x \in K$ and $t \in [0,1]$, therefore, for any two points $z_1,z_2 \in R_p$,
		\begin{align*}
			|\omega(z_1,z_2)| = |\omega(t_1 x_1 \pm (1-t_1)p, t_2 x_2 \pm (1-t_2)p)| \leq\\
			\leq  t_1 t_2 + (1-t_1) t_2 + t_1 (1-t_2) = t_1 + t_2 - t_1t_2 \leq 1. 
		\end{align*}

		Consequently, it is sufficient to show that for $p \in S$, the inclusion $S \subset R_p^{\omega}$ holds. Let $q \in S$ and $z = tx \pm (1-t)p$, where $x \in K$ and $t \in [0,1]$. Since $q \in K^{\omega}$, then $|\omega(q,x)| \leq 1$, and since $q,p \in S$, then $|\omega(q,p)|\leq 1$, and, consequently, $|\omega(q,z)| \leq 1$. Thus, $S \subset R_p^{\omega}$.
	\end{proof}

	\subsection*{An algorithm for generating symplectically self-polar polytopes}
	
	To start with, we have a centrally symmetric convex polytope $K \subset \mathbb{R}^{2n}$ such that 
$K \subseteq K^{\omega}$.
	\begin{enumerate}
		\item Construct the polytope $K^{\omega} = JK^{\circ}$.
		\item Choose a maximal centrally symmetric vertex subset $S$ of the polytope $K^{\omega}$ satisfying assumptions of Lemma~\ref{lem:increase}.
		\item If it happens that $S$ equals to the vertex set of $K^{\omega}$, then $K^{\omega} \subseteq K$ by Lemma~\ref{lem:pol_check}. Consequently, $K^{\omega} = K$ and we halt the algorithm. Otherwise, replace the polytope $K$ by the polytope $\conv K \cup S$ and go back to the step 1.
	\end{enumerate}

	\begin{question}
		It is not clear whether this algorithm stops after finitely many steps. In practice, it turns out that it does, at least in dimension $4$. Is there a way to prove this in arbitrary dimension? If yes, then together with Zorn's lemma (see~\cite[Lemma 6.6]{berezovik2022symplectic}) this implies that the set of symplectically self-polar polytopes is dense among all symplectically self-polar convex bodies (in Hausdorff metric). 
	\end{question}

	\begin{remark}
		To construct a polar polytope $K^{\circ}$, to compute volumes of various polytopes, and to generate random polytopes the author used the software package \texttt{polymake}~\cite{polymake:2000}. All calculations were carried out using fractions, without using approximate values. Only the final volume, if necessary, was converted to an approximate decimal fraction for the convenience of interpreting the results.
	\end{remark}

 \begin{figure}[ht]
   \centering 
   \begin{subfigure}{0.45\textwidth}
     \includegraphics[width=\textwidth]{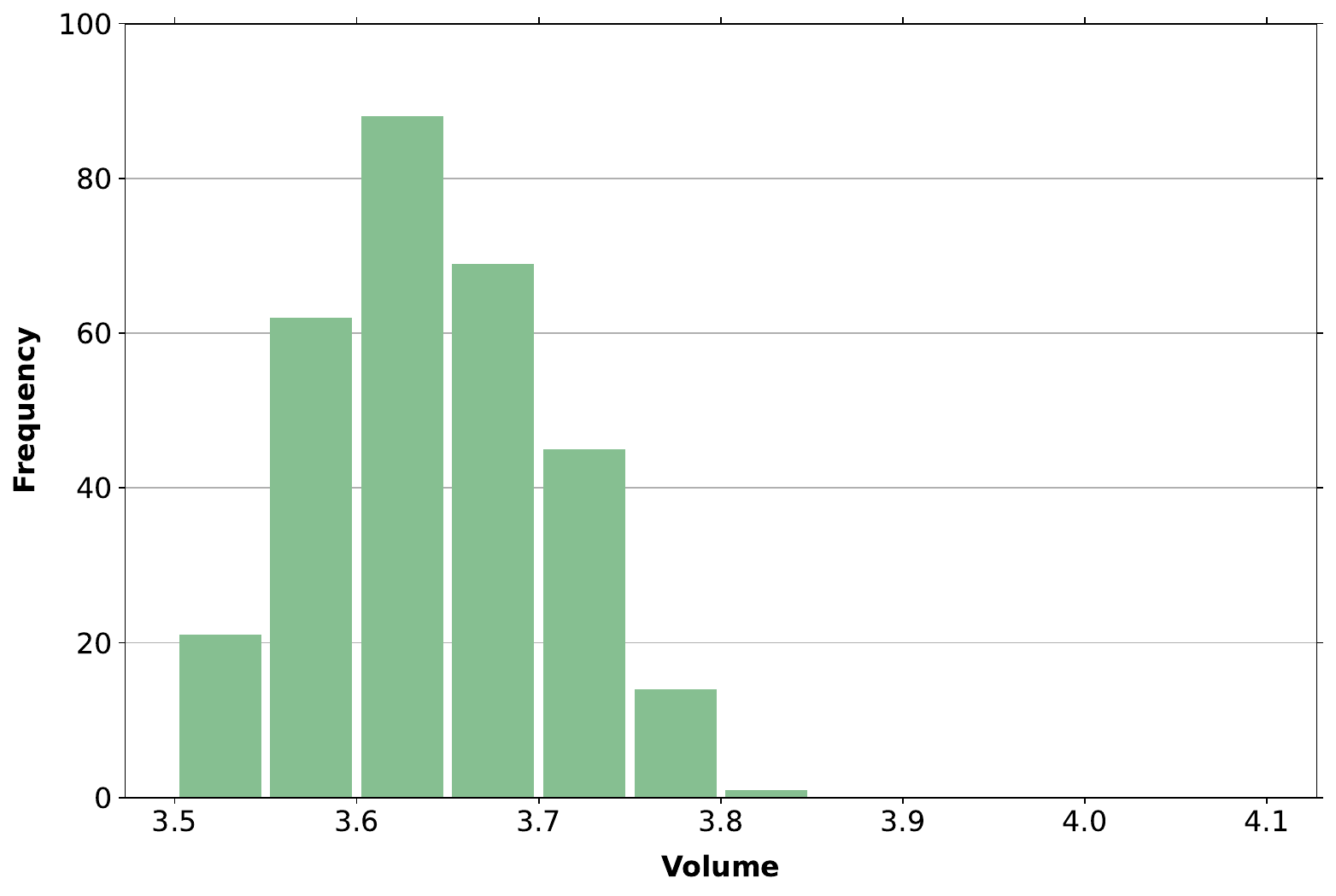}
     \caption{$k = 4$}
     \label{fig:first}
   \end{subfigure}
   \hfill
   \begin{subfigure}{0.45\textwidth}
     \includegraphics[width=\textwidth]{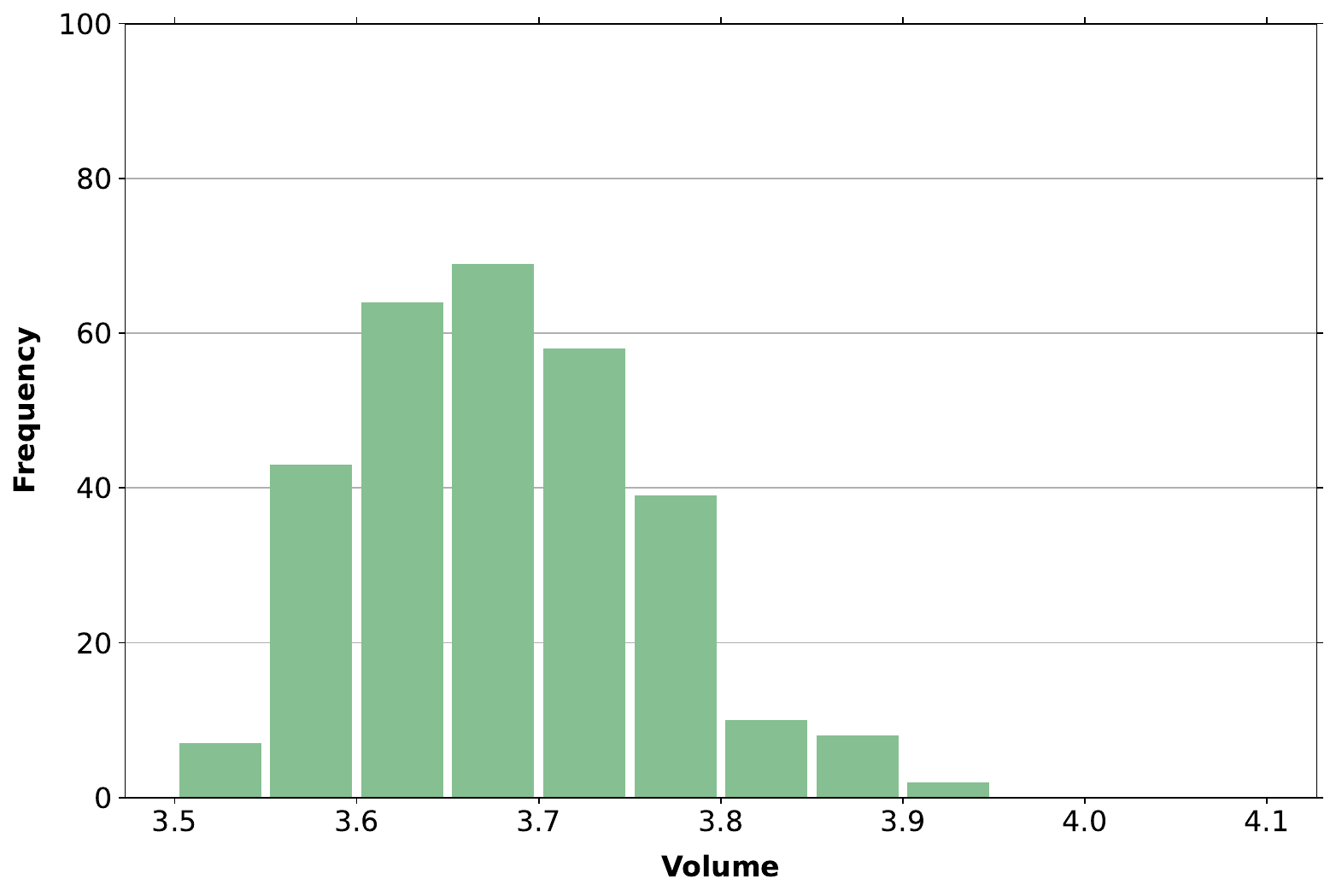}
     \caption{$k = 5$}
     \label{fig:second}
   \end{subfigure}
   \hfill
   \begin{subfigure}{0.45\textwidth}
     \includegraphics[width=\textwidth]{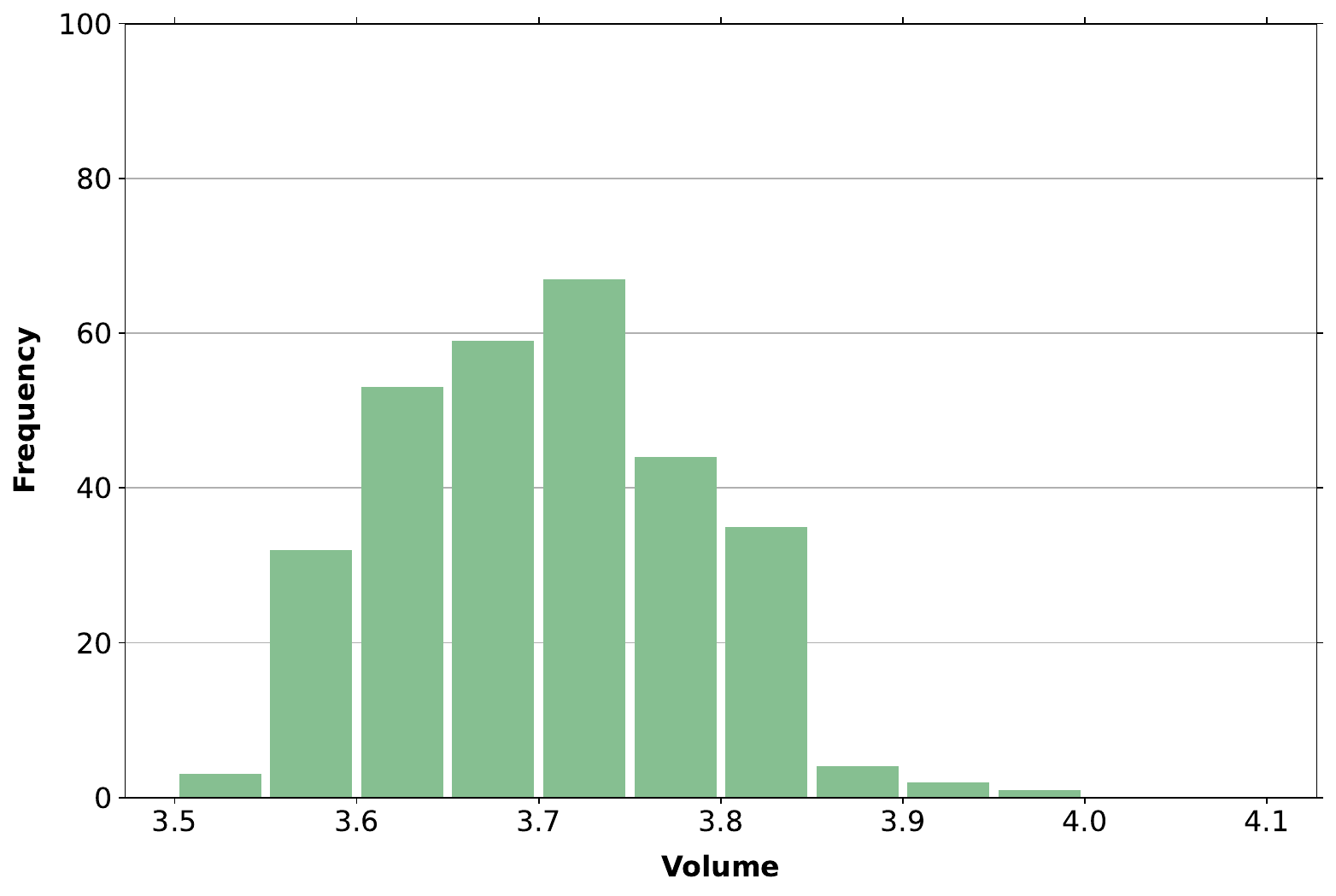}
     \caption{$k = 6$}
     \label{fig:third}
   \end{subfigure}
   \hfill
   \begin{subfigure}{0.45\textwidth}
     \includegraphics[width=\textwidth]{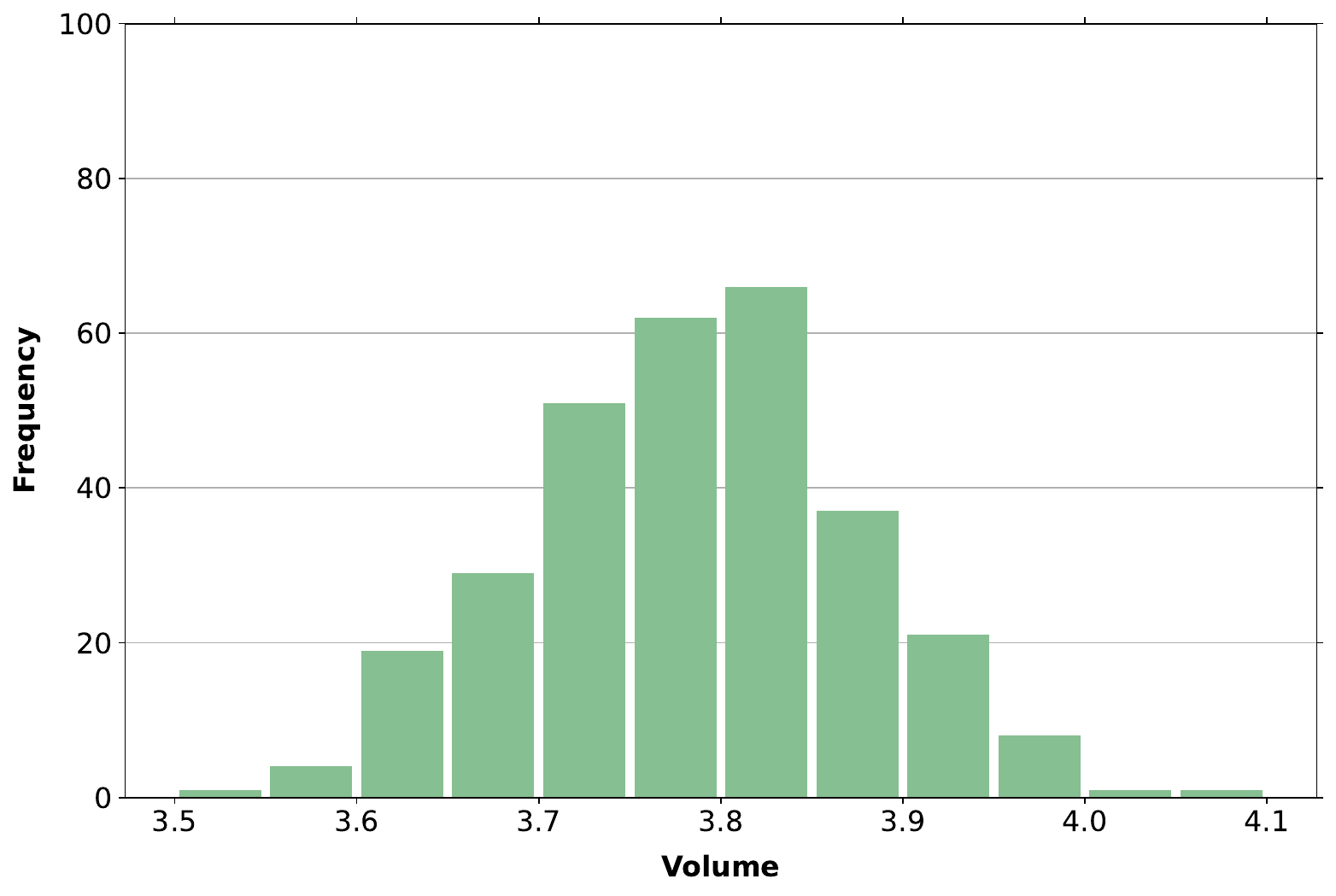}
     \caption{$k = 10$}
     \label{fig:fourth}
   \end{subfigure}
   \caption{Histograms of the volumes of the resulting polytopes for various~$k$.}
   \label{fig:figures}
 \end{figure}

	It is sufficient to take an initial polytope $K$, for the algorithm, inside the Euclidean unit ball $K \subset B^{2n}(1) = B^{2n}(1)^{\omega} \subset K^{\omega}$. For this we could generate $k \geq 2n$ random points inside the ball such that together with the origin they are in the general position.

	Using this approach in $\mathbb{R}^4$ for every fixed number of points $k \in \{4,5,6,10\}$, we generated $300$ symplectically self-polar convex polytopes and computed their volume. In Figure~\ref{fig:figures} the histogram is drawn for each $k$, where the volume of the resulting polytope is indicated on the horizontal axis.

 \begin{remark}
 		In dimension $4$, all volumes of randomly generated symplectically self-polar convex bodies were greater than $\vol(P^{\ltimes 2}) = 3.5$, and the numbers of vertices were strictly greater than $|V(P^{\ltimes 2})| = 16$.
 \end{remark}

 Note the following patterns that can be seen from these histograms. Firstly, all histograms are bounded by $3.5$ from below. Secondly, as $k$ increases, they shift to the right. Heuristically, this can be explained as follows. The greater the number $k$ the initial polytope is closer to a ball. But the Euclidean unit ball is the symplectically self-polar body with the maximal volume~\cite[Remark 1.3]{berezovik2022symplectic}.

 \begin{remark}
 	In the two-dimensional case the minimal volume is attained on a symplectically self-polar polytope with the minimal number of vertices. From this and the discussion above, it seems that the minimal volume should be attained on a symplectically self-polar polytope with the minimum possible number of vertices.
 \end{remark}

 For this reason in dimension $6$ it reasonable to take minimally possible $k = 6$. But in this dimension, with such $k$, quite serious computational difficulties appear.

\subsection*{Enumeration of all symplectically self-polar \texorpdfstring{$-1/0/1$}{TEXT} polytopes}

\begin{definition}
	A polytope $K \subset \mathbb{R}^{n}$ is called \emph{$-1/0/1$ polytope} if its vertices lie in the set $\{-1,0,1\}^{n}$.
\end{definition}

	For the enumeration of all symplectically self-polar $-1/0/1$ polytopes in dimension $2n$ the following algorithm was used:
	\begin{enumerate}
   \item  Look for an inclusion-maximal set $R \subset \{-1,0,1\}^{2n}$ such that $|\omega(v,w)| \leq 1$ for every $v,w \in R$.
   \item Construct the polytope $K_R = \conv R$. By Lemma~\ref{lem:pol_check} it turns out that $K_R \subseteq K_R^{\omega}$.
   \item Move on to the polytope $K_R^{\omega}$ and check the condition $|\omega(v,w)| \leq 1$ for its vertices. If it holds, then the polytope $K_R$ is symplectically self-polar. Otherwise, the polytope $K_R$ is not symplectically self-polar and we just ignore the set $R$.
 \end{enumerate}

 If we go through all sets $R$ from the bullet point 1, we will find all symplectically self-polar $-1/0/1$ polytopes.

 \begin{remark}
 		In dimension 4 it turns out that $K_R^{\omega} = K_R$ for all such $R$. In dimension 6 there are sets $R$ such that $K_R \subsetneq K_R^{\omega}$.
 \end{remark}

If we go through all symplectically self-polar $-1/0/1$ polytopes in dimension 4, then we can classify them by the volume and the number of vertices (see Table~\ref{Table_1}).
 \begin{table}[ht]
 \begin{center}
   \begin{tabular}{|c|c|c|c|c|}
   \hline
   $|V(K)|$ & 16   & 20   & 24   & 24   \\ \hline
   $\vol K$ & 21/6 & 22/6 & 23/6 & 24/6 \\ \hline
   \end{tabular}
  \end{center}
   \caption{All possible combinations of volumes and the number of vertices of symplectically self-polar $-1/0/1$ polytopes in dimension $4$.}
   \label{Table_1}
 \end{table}

 \begin{remark}
 	It was checked manually that every symplectically self-polar $-1/0/1$ polytope in dimension 4 with volume $3.5 = 21/6$ is linearly symplectomorphic to $P^{\ltimes 2}$.
 \end{remark}

 In dimension 6, we found 45 different combinations of number of vertices and volumes of symplectically self-polar $-1/0/1$ polytopes. There might be more as we were not able to go through all such sets $R$ because it is computationally hard. Meanwhile, among those 45, only the polytope linearly symplectomorphic to $P^{\ltimes 2}$ had the minimal volume and the minimal number of vertices.

\subsection*{Conclusions from the numerical experiments}

$P$ is a symplectically self-polar polytope with the minimal volume and the minimal number of vertices in dimension 2. In dimension 4, taking into account the results of the experiments, it seems that the polytope $P^{\ltimes 2}$ has the minimal volume and the minimal number of vertices. Note that both of them are $-1/0/1$ polytopes. So in dimension 6 it seems reasonable to search polytopes of minimal volume among $-1/0/1$ polytopes. During the search the polytope linearly symplectomorphic to $P^{\ltimes 2}$ was found, while the others had a greater volume and greater number of vertices.

\section{Appendix}\label{section:appendix}

\begin{proof}[Proof of Lemma~\ref{lem:compare}]
	Denote
	\[
		a_n = \frac{\vol(Q^n \oplus_2 C^n)}{\vol(P^{\ltimes n})} = \frac{2^n \left(\Gamma\left(\frac{n}{2} + 1\right)\right)^2 \Gamma\left(n + \frac{1}{2}\right) \Gamma\left(\frac{3}{4}\right)}{\Gamma(n+1) \Gamma \left(n+\frac{3}{4}\right) \Gamma\left(\frac{1}{2}\right)}.
	\]
	Then
	\[
		\frac{a_{n+2}}{a_n} = \frac{4 \left(\frac{n}{2} + 1\right)^2 \left(n + \frac{1}{2}\right)\left(n + \frac{3}{2}\right)}{\left(n + 1\right) \left(n + 2\right)\left(n + \frac{3}{4}\right) \left(n + \frac{7}{4}\right)}  = \frac{16n^3 + 64n^2 + 76n + 24}{16n^3 + 56n^2 + 61n + 21} > 1.
	\]
	At the same time
	\begin{align*}
     a_1 &= \frac{\pi}{3},\\
     a_2 &= \frac{8}{7}.
  \end{align*}
  It is easy to check that $a_2 > a_1$, hence $a_{n} > a_1$ for all $n>1$.

  The asymptotic formula follows directly from the Stirling approximation.
\end{proof}

\begin{proof}[Proof of Lemma~\ref{lem:cehz}]
	Denote
	\[
		a_n = \frac{n! \vol(P^{\ltimes n})}{(2+1/n)^n} = \left(\frac{2n}{2n+1}\right)^n \cdot \frac{\Gamma\left(n+\frac{3}{4}\right)}{\Gamma\left(n+\frac{1}{2}\right)} \cdot \frac{\Gamma\left(\frac{1}{2}\right)}{\Gamma\left(\frac{3}{4}\right)}.
	\]
	Then
	\[
		\frac{a_{n+1}}{a_n} = \left(\frac{2n+1}{2n}\right)^{n} \cdot \left(\frac{2n+2}{2n+3}\right)^{n+1} \frac{(n+3/4)}{(n+1/2)} = \left(\frac{2n^2 + 3n +1}{2n^2+ 3n}\right)^n \frac{(n+1)(4n+3)}{(2n+3)(2n+1)}.
	\]
	By the Bernoulli inequality,
	\[
		\left(\frac{2n^2 + 3n +1}{2n^2+ 3n}\right)^n = \left(1 + \frac{1}{2n^2 + 3n}\right)^n \geq 1 + \frac{1}{2n+3} = \frac{2n+4}{2n+3}.
	\]
	\[
		\frac{a_{n+1}}{a_n} \geq \frac{(2n+4)(n+1)(4n+3)}{(2n+3)^2(2n+1)} = \frac{8n^3 +30n^2 +34n+12}{8n^3 + 28 n^2 + 30n + 9} > 1.
	\]
	The asymptotic formula follows directly from the Stirling approximation.
\end{proof}

\bibliography{bibliography}
\bibliographystyle{abbrv}
\end{document}